\documentclass[11pt]{amsart}

\usepackage{amsaddr}
\usepackage{caption}
\usepackage{enumerate}
\usepackage{float}
\usepackage{graphicx}
\usepackage{geometry} 
\usepackage{setspace}
\usepackage{subcaption}
\usepackage{tabu} 

\geometry{a4paper} 

\newcommand{\vx}{\vct{x}}

\newcommand{\vu}{\vct{u}}

\newcommand{\vvu}{\vvct{u}}
\newcommand{\vvU}{\vvct{U}}

\newcommand{\vv}{\vct{v}}
\newcommand{\vvv}{\vvct{v}}

\newcommand{\vw}{\vct{w}}

\newcommand{\vun}{\hat{\vct{n}}}

\newcommand{\vf}{\vct{f}}

\newcommand{\vkappa}{\boldsymbol{\kappa}}

\newcommand{\vo}{\vct{0}}

\newcommand{\tsigma}{{\boldsymbol{\sigma}}}

\newcommand{\tzero}{\ten{0}}

\newcommand{\Ch}{C}

\newcommand{\im}{\hat{\i}}

\newcommand{\av}[1]{ \{\!\!\{#1\}\!\!\} }
\newcommand{\ju}[1]{ [\![#1]\!] }

\newcommand{\vct}[1]{{\mathbf{#1}}}

\newcommand{\vvct}[1]{\underline{\mathbf{#1}}}
\newcommand{\ten}[1]{{\mathbf{#1}}}

\newtheorem{thm}{Theorem}[section]

\newtheorem{rem}[thm]{Remark}

\newtheorem{case}{Case}


\title[Dispersion Properties of Explicit Finite Element Methods]{Dispersion Properties of Explicit Finite Element Methods for Wave Propagation Modelling on Tetrahedral Meshes*}
\author{S. Geevers$^1$, W.A. Mulder$^{2,3}$ \and J.J.W. van der Vegt$^1$}
\address{1. Department of Applied Mathematics, University of Twente, Enschede, the Netherlands (e-mail: s.geevers@utwente.nl, j.j.w.vandervegt@utwente.nl)}
\address{2. Shell Global Solutions International BV (e-mail: wim.mulder@shell.com)}
\address{3. Delft University of Technology}
\thanks{*This work was funded by the Shell Global Solutions International B.V. under contract no.~PT45999.}

\begin{document}
\maketitle

\begin{abstract}
We analyse the dispersion properties of two types of explicit finite element methods for modelling acoustic and elastic wave propagation on tetrahedral meshes, namely mass-lumped finite element methods and symmetric interior penalty discontinuous Galerkin methods, both combined with a suitable Lax--Wendroff time integration scheme. The dispersion properties are obtained semi-analytically using standard Fourier analysis. Based on the dispersion analysis, we give an indication of which method is the most efficient for a given accuracy, how many elements per wavelength are required for a given accuracy, and how sensitive the accuracy of the method is to poorly shaped elements.
\end{abstract}

\section{Introduction}
\label{sec:introduction}
Realistic wave propagation problems often involve large three-dimensional domains consisting of heterogeneous materials with complex geometries and sharp interfaces. Solving such problems requires a numerical method that is efficient in terms of computation time and is flexible enough to capture the effect of a complex geometry. 

Standard finite difference methods fall short, since they rely on Cartesian grids that cannot efficiently capture the effect of complex interfaces and boundary layers. Finite element methods can overcome this problem when the elements are aligned with those surfaces. However, the accuracy of the finite element method quickly deteriorates when the elements are poorly shaped or are poorly aligned with the geometry. Obtaining a high quality mesh is therefore quintessential. While both hexahedral and tetrahedral elements are commonly used for three-dimensional problems, tetrahedral elements have a big advantage in this respect, since they offer more geometric flexibility and since robust tetrahedral mesh generators based on the Delaunay criterion are available \cite{owen98, si15}.

Apart from the construction of a high-quality mesh, finite element methods for wave propagation problems also require a (block)-diagonal mass matrix to enable explicit time-stepping. A diagonal mass matrix can be obtained with nodal basis functions and a quadrature rule, if the quadrature points coincide with the basis function nodes. This technique is known as mass-lumping. For quadrilaterals and hexahedra, mass-lumping is achieved by combining tensor-product basis functions with a Gauss-Lobatto quadrature rule, resulting in a scheme known as the spectral element method \cite{patera84, seriani94, komatitsch98}. For triangles and tetrahedra, an efficient linear mass-lumped scheme is obtained by combining standard Lagrangian basis functions with a Newton--Cotes quadrature rule. For higher-degree triangles and tetrahedra, however, this approach results in an unstable, unsolvable, or inaccurate scheme. To remain accurate and stable, the space of the triangle or tetrahedron is enriched with higher-degree bubble functions. This approach has led to accurate mass-lumped triangles of degree 2 \cite{fried75}, 3 \cite{cohen95}, 4 \cite{mulder96}, 5 \cite{chin99}, 6 \cite{mulder13}, 7-9 \cite{liu17,cui17} and tetrahedra of degree 2 \cite{mulder96} and 3 \cite{chin99}.


Another way to obtain a (block)-diagonal mass matrix is by using discontinuous basis functions. The resulting schemes are known as Discontinuous Galerkin (DG) methods. The first DG methods for wave propagation problems were based on a first-order formulation of the wave equation \cite{reed73, cockburn89}. In \cite{riviere03} and \cite{grote06}, DG methods  were introduced that were based on the original second-order formulation of the wave problem. The advantage of finite element methods based on the second-order formulation is that they do not need to compute or store the auxiliary variables that appear in the first-order formulation.  Moreover, they can be combined with a leap-frog or higher-order Lax--Wendroff time integration scheme that only requires $K$ stages for a $2K$-order accuracy. We focus on the symmetric interior penalty discontinuous Galerkin (SIPDG) method, presented and analysed in \cite{grote06}, which is based on the second-order formulation of the wave problem and which also remains energy-conservative on the discrete level. To remain accurate and stable, face integrals and interior penalty parameters are added to the discrete operator. We consider two choices for the penalty parameter: the penalty term derived in \cite{mulder14}, based on the trace inequality of \cite{warburton03}, and a recently developed sharper estimate \cite{geevers17}, based on a more involved trace inequality. 

To effectively apply these methods, it is crucial to know the required mesh resolution for a given accuracy. It is also useful to know which method is the most efficient for a given accuracy and how the mesh quality and material parameters, such as the P/S-wave velocity ratio for elastic waves, affect the accuracy. A practical and common measure for the accuracy of these type of methods is the amount of numerical dispersion and dissipation. In this paper, we will focus mainly on the numerical dispersion, since the methods we consider are all energy-conservative and therefore do not suffer from numerical dissipation. We do, however, also investigate the spurious modes that appear when projecting a physical wave onto the discrete space.

The dispersion properties of DG methods based on the first-order formulation of the wave problem have already been analysed for Cartesian meshes \cite{hu99, ainsworth04}, triangles \cite{hu99, lisitsa16}, and tetrahedra \cite{kaser08}. For the SIPDG method, these properties have already been analysed for Cartesian meshes in \cite{ainsworth06, deBasabe08} and triangles in \cite{antonietti16} and for the mass-lumped finite element method this has already been analysed for quadrilaterals and hexahedra in \cite{mulder99,cohen02,deBasabe07} and for triangles in \cite{liu12}. However, a dispersion analysis of the mass-lumped finite element and SIPDG methods for tetrahedra is, to the best of our knowledge, still missing, even though most realistic wave problems involve three-dimensional domains for which tetrahedral elements are particularly suitable. In this paper, we therefore present an extensive dispersion analysis of these methods for tetrahedra. This analysis is based on standard Fourier analysis. We use the analysis to obtain estimates for the required number of elements per wavelength and estimate the computational cost to obtain an indication of which method is the most efficient for a given accuracy. We consider both acoustic and elastic waves and also look at the effect of poorly shaped elements and high P/S-wave velocity ratios on the accuracy of the methods.


This paper is organised as follows: in Section \ref{sec:tenNotation}, we introduce the tensor notation used in this paper. The acoustic and elastic wave equations are presented in Section \ref{sec:genHypModel} and the mass-lumped and discontinuous Galerkin finite element methods are presented in Section \ref{sec:FEM}. In Section \ref{sec:dispAn}, we explain how we analyse the dispersion properties of these methods. The results of this analysis are presented in Section \ref{sec:results} and the main conclusions are summarised in Section \ref{sec:conclusions}.

\section{Some Tensor Notation}
\label{sec:tenNotation}
Before we present the acoustic and elastic wave equations, we explain the tensor notation that we use throughout this paper. We let the dot product of two tensors denote the summation over the last index of the left and first index of the right tensor. For the double dot product we also sum over the last-but-one index of the left and second index of the right tensor. A concatenation of two tensors denotes the standard tensor product. To give some examples, let $\vun\in\mathbb{R}^d, \vu\in\mathbb{R}^m$ be two vectors, $\tsigma\in\mathbb{R}^{d\times m}$ a second-order tensor, and $C\in\mathbb{R}^{d\times m\times m\times d}$ a fourth-order tensor.  Then
\begin{align*}
[\vun\vu]_{ij} &:= \hat{n}_{i} u_{j},   &[\tsigma\cdot\vu]_{i} &:= \sum_{l=1}^m \sigma_{il}u_{l},   \\
[C:\tsigma]_{ij} &:= \sum_{k=1}^d\sum_{l=1}^m C_{ijlk}\sigma_{kl},  &[\vun\cdot C]_{qji} &:= \sum_{k=1}^d \hat{n}_kC_{kqji},
\end{align*}
for all $i=1,\dots,d$ and $j,q=1,\dots,m$.

In the next section we will use this tensor notation to present the acoustic and isotropic elastic wave equations.

\section{The Acoustic and Isotropic Elastic Wave Equations}
\label{sec:genHypModel}
Let $\Omega\subset\mathbb{R}^3$ be a three-dimensional open domain with a Lipschitz boundary $\partial\Omega$, and let $(0,T)$ be the time domain. Also, let $\{\Gamma_d,\Gamma_n\}$ be a partition of $\partial\Omega$, corresponding to Dirichlet and von Neumann boundary conditions, respectively. We define the following linear hyperbolic problem:
\begin{subequations}
\begin{align}
\rho\partial_t^2\vu &= \nabla\cdot C:\nabla \vu + \vf &&\text{in }\Omega\times(0,T),  \label{eq:model_a} \\
C:\vun\vu &=\tzero &&\text{on }\Gamma_d\times(0,T),  \label{eq:model_b}\\
\vun\cdot C:\nabla\vu &= \vo &&\text{on }\Gamma_n\times(0,T), \\
\vu|_{t=0} &= \vu_0 &&\text{in }\Omega, \\
\partial_t\vu|_{t=0} &= \vv_0 && \text{in }\Omega,
\end{align}
\label{eq:model}%
\end{subequations}
where $\vu:\Omega\times(0,T)\rightarrow\mathbb{R}^m$ is a vector of $m$ variables that are to be solved, $\nabla$ is the gradient operator, $\rho:\Omega\rightarrow\mathbb{R}^+$ is a positive scalar field, $C:\Omega\rightarrow\mathbb{R}^{3\times m\times m\times 3}$ a fourth-order tensor field, $\vf:\Omega\times(0,T)\rightarrow\mathbb{R}^{m}$ the source field, and $\vun:\partial\Omega\rightarrow\mathbb{R}^3$ the outward pointing normal unit vector. 

By choosing the appropriate tensor and scalar field we can obtain the acoustic wave equation and the isotropic elastic wave equations.

\begin{case}[Isotropic elastic wave equations]
To obtain the isotropic elastic wave equations, set $m=3$ and
\begin{align*}
C_{ijqp} = \lambda\delta_{ij}\delta_{pq} + \mu(\delta_{ip}\delta_{jq} + \delta_{iq}\delta_{jp}),
\end{align*}
for $i,j,p,q=1,2,3$, where $\delta$ is the Kronecker delta. Equation (\ref{eq:model_a}) then becomes
\begin{align*}
\rho\partial_t^2 \vu = \nabla\lambda(\nabla\cdot\vu) + \nabla\cdot\mu(\nabla\vu + \nabla\vu^t) + \vf, 
\end{align*}
where $\vu:\Omega\times(0,T)\rightarrow\mathbb{R}^3$ is the displacement field, $\rho:\Omega\rightarrow\mathbb{R}^+$ is the mass density, $\lambda, \mu:\Omega\rightarrow\mathbb{R}^+$ are the Lam\'e parameters, and $\vf:\Omega\times(0,T)\rightarrow\mathbb{R}^{3}$ is the external volume force. The superscript $t$ denotes the transposed.
\label{case:elasticWave}
\end{case}

\begin{case}[Acoustic wave equation]
To obtain the acoustic wave equation, set $m=1$, $u= p$, $\rho=(\tilde\rho \tilde{c}^2)^{-1}$, and
\begin{align*}
C_{i11j} := \frac{1}{\tilde{\rho}}\delta_{ij},
\end{align*}
for $i,j=1,2,3$, where $\delta$ is the Kronecker delta. Equation (\ref{eq:model_a}) then becomes
\begin{align*}
\frac{1}{\tilde\rho \tilde{c}^2}\partial_t^2 p = \nabla\cdot\frac{1}{\tilde\rho}\nabla p + f,
\end{align*}
where $p:\Omega\times(0,T)\rightarrow\mathbb{R}$ is the pressure field, $\tilde\rho:\Omega\rightarrow\mathbb{R}^+$ the mass density, $\tilde c:\Omega\rightarrow\mathbb{R}^+$ the acoustic velocity field, and $f=\nabla\cdot(\tilde{\rho}^{-1}\tilde\vf)$ the source term with $\tilde{\vf}:\Omega\times(0,T)\rightarrow\mathbb{R}^{3}$ the external volume force. 
\label{case:acousticWave}
\end{case} 

These equations can be solved with the finite element methods described in the next section.

\section{The Discontinuous Galerkin and Mass-Lumped Finite Element Method}
\label{sec:FEM}
\subsection{The Classical Finite Element Method}
Let $\mathcal{T}_h$ be a tetrahedral tessellation of $\Omega$, with $h$ denoting the radius of the smallest sphere that can contain each element and let $\mathcal{U}_h$ be the finite element space consisting of continuous element-wise polynomial basis functions satisfying boundary condition (\ref{eq:model_b}). The classical conforming finite element formulation of (\ref{eq:model}) is finding $\vu_h:[0,T]\rightarrow \mathcal{U}_h$ such that $\vu_h|_{t=0}=\Pi_{h}\vu_0$, $\partial_t\vu_h|_{t=0}=\Pi_{h}\vv_0$ and
\begin{align}
(\rho\partial_t^2\vu_h,\vw) + a(\vu_h,\vw) &= (\vf,\vw), && \vw\in \mathcal{U}_h, t\in[0,T], 
\label{eq:FEM}%
\end{align}
where $(\cdot,\cdot)$ denotes the standard $L^2$ inner product, $\Pi_{h}:L^2(\Omega)\rightarrow \mathcal{U}_h$ denotes the weighted $L^2$-projection operator defined such that $(\rho\Pi_{h}\vu,\vw)=(\rho\vu,\vw)$ for all $\vw\in \mathcal{U}_h$, and $a:H^1(\Omega)^m\times H^1(\Omega)^m\rightarrow \mathbb{R}$ is the (semi)-elliptic operator given by
\begin{align*}
a(\vu,\vw) &:= \int_{\Omega} (\nabla\vu)^t : C : \nabla\vw \;dx.
\end{align*}

Let $\{\vw^{(i)}\}_{i=1}^{n}$ be the set of basis functions spanning $\mathcal{U}_h$, and let, for any $\vu\in L^2(\Omega)^m$, the vector $\vvu\in\mathbb{R}^n$ be defined such that $\sum_{i=1}^n \underline{u}_i\vw^{(i)}=\Pi_{h}\vu$. Also, let $M,A\in\mathbb{R}^{n\times n}$ be the mass matrix and stiffness matrix, respectively, defined by $M_{ij}:=(\rho\vw^{(i)},\vw^{(j)})$ and $A_{ij}:=a(\vw^{(i)},\vw^{(j)})$, and let $\vvct{f}^*:[0,T]\rightarrow\mathbb{R}^n$ be given by $\underline{f}_i^*:=(\vf,\vw^{(i)})$. The finite element method can then be formulated as finding $\vvu_h:[0,T]\rightarrow\mathbb{R}^n$ such that $\vvu_h|_{t=0}=\underline{\vu_0}$, $\partial_t\vvu_h|_{t=0} = \underline{\vv_0}$, and
\begin{align}
M\partial_t^2\vvu_h + A\vvu_h = \vvct{f}^*, &&t\in[0,T].
\label{eq:ODE}
\end{align}

The main drawback of the classical conforming finite element approach is that when an explicit time integration scheme is applied, a system of equations of the form $M\vvct{x}=\vvct{b}$ needs to be solved at every time step, with $M$ not (block)-diagonal. For large-scale problems, this results in a very inefficient time stepping scheme. This problem can be circumvented by lumping the mass matrix into a diagonal matrix or by using discontinuous basis functions.

\subsection{Mass-Lumping}
When using nodal basis functions, the mass matrix can be lumped into a diagonal matrix by taking the sum over each row. This is equivalent to replacing the inner product $(\cdot,\cdot)$ by $(\cdot,\cdot)_h^{(L)}$, in which the element integrals are approximated by a quadrature rule with quadrature points that coincide with the nodes of the basis functions. We can write
\begin{align*}
(\vu,\vw)_h^{(L)} = \sum_{e\in\mathcal{T}_h} \sum_{\vx\in\mathcal{Q}_e} \omega_{e,\vx}\rho(\vx)\vu(\vx)\cdot\vw(\vx),
\end{align*}
where $\mathcal{Q}_e$ denotes the quadrature points on element $e$ and $\omega_{e,\vx}$ denote the quadrature weights. Let $\{\vx^{(i)}\}_{i=1}^n$ denote the global set of integration points and define $\vw^{(i)}$ to be the nodal basis function corresponding to $\vx^{(i)}$, so $\vw^{(i)}(\vx^{(j)})=\delta_{ij}$, with $\delta$ the Kronecker delta. Then the mass matrix becomes diagonal with entries $M_{ii}=\sum_{e\in\mathcal{T}_{\vx^{(i)}}} \omega_{e,\vx^{(i)}}\rho(\vx^{(i)})$, where $\mathcal{T}_{\vx}$ denotes the set of elements containing or adjacent to $\vx$.

For quadrilaterals and hexahedra, mass-lumping is achieved by using tensor-product basis functions and Gauss-Lobatto integration points. The resulting scheme is known as the spectral element method. For triangles and tetrahedra, mass-lumping is less straight-forward. Combining standard Lagrangian basis functions with a Newton--Cotes quadrature rule results in an efficient mass-lumped scheme for linear tetrahedra, but for higher-degree basis functions, this approach results either in an unstable scheme due to non-positive quadrature weights or in a scheme with a reduced order of convergence. This problem can be resolved by enriching the finite element space with higher-degree bubble functions and by adding integration points to the interior of the elements and faces. For example, by enriching the space of the quadratic tetrahedron with 3 degree-4 face bubble functions and 1 degree-4 interior bubble function, an enriched degree-2 mass-lumped tetrahedron that remains third-order accurate can be obtained  \cite{mulder96}. 

In this paper we will analyse the standard linear mass-lumped finite element method, the mass-lumped finite element method of degree 2 derived in \cite{mulder96}, and the 2 versions of degree 3 mass-lumped finite element methods derived in \cite{chin99}. We will refer to these methods as ML1, ML2, ML3a and ML3b, respectively.

\subsection{The Symmetric Interior Penalty Discontinuous Galerkin Method}
Another way to obtain a (block)-diagonal mass matrix is by allowing the finite element space $\mathcal{U}_h$ to be discontinuous at the faces. When choosing basis functions that have support on only a single element, the mass matrix becomes block-diagonal with each block corresponding to a single element. When using orthogonal basis functions, the mass matrix even becomes strictly diagonal. In order to keep the finite element method stable and consistent with the analytic solution, the elliptic operator needs to be augmented. This can be accomplished with the symmetric interior penalty method \cite{grote06}, where $a$ is replaced by the discrete (semi)-elliptic operator $a_h^{(DG)}:\mathcal{U}_h\times \mathcal{U}_h\rightarrow\mathbb{R}$, given by
\begin{align*}
a_h^{(DG)}(\vu,\vw) &:=  a_h^{(C)}(\vu,\vw) - a_h^{(D)}(\vu,\vw) - a_h^{(D)}(\vw,\vu) + a_h^{(IP)}(\vu,\vw) 
\end{align*}
with
\begin{align*}
a_h^{(C)}(\vu,\vw) &:=  \sum_{e\in\mathcal{T}_h} \int_{e} (\nabla\vu)^t:\Ch:\nabla\vw \;d\vct{x}, \\
a_h^{(D)}(\vu,\vw) &:= \sum_{f\in\mathcal{F}_{h,in}\cup\mathcal{F}_{h,d}} \int_f \ju{\vu}^t:\av{\Ch : \nabla\vw} \;d\vct{s}, \\
a_h^{(IP)}(\vu,\vw) &:= \sum_{f\in\mathcal{F}_{h,in}\cup\mathcal{F}_{h,d}} \int_{f} \ju{\vu}^t:\av{\alpha_h \Ch}:\ju{\vw} \;d\vct{s},
\end{align*}
where $\mathcal{F}_{h,in}$ and $\mathcal{F}_{h,d}$ are the internal faces and boundary faces on $\Gamma_d$, respectively, $\alpha_h\in\bigotimes_{e\in\mathcal{T}} L^{\infty}(\partial e)$ is the penalty function, and $\av{\cdot}$, $\ju{\cdot}$ are the average trace operator and jump operator, respectively, defined as
\begin{align*}
\av{\phi} \big|_f &:= \frac{1}{|\mathcal{T}_f|}\sum_{e\in\mathcal{T}_f} \phi|_{\partial e\cap f}, & \ju{\vu} \big|_f &:= \sum_{e\in\mathcal{T}_f} (\vun \vu)|_{\partial e\cap f}, 
\end{align*}
for all faces $f\in\mathcal{F}$, where $\mathcal{T}_f$ denotes the set of elements adjacent to face $f$, and $\vun|_{\partial e}$ denotes the outward pointing normal unit vector of element $e$. The bilinear form $a^{(C)}_h$ is the same as the original elliptic operator $a$ and is the part that remains when both input functions are continuous. The bilinear form $a^{(D)}_h$ can be interpreted as the additional part that results from partial integration of the elliptic operator $a$ when the first input function is discontinuous. Finally, the bilinear form $a^{(IP)}_h$ is the part that contains the interior penalty function needed to ensure stability of the scheme. 

The penalty term can have a significant impact on the performance of the SIPDG method, since a larger penalty term results in a more restrictive bound on the time step size, but also because it can have a significant effect on the accuracy, as we will show in Section \ref{sec:results}. Several lower bounds for the penalty term are based on the trace inequality of \cite{warburton03}, including \cite{shahbazi05, epshteyn07, mulder14}, among which we found the bound in \cite{mulder14} to be the sharpest. Recently, a sharper penalty term bound was presented in \cite{geevers17}, which is based on a more involved trace inequality. In this paper we will consider both the penalty term of \cite{geevers17}, given by (\ref{eq:penTerm_a}), and the one of \cite{mulder14}, given by (\ref{eq:penTerm_b}):
\begin{subequations}
\begin{align}
\alpha_h|_{\partial e\cap f} &:= \frac{\nu_h|_{\partial e\cap f}}{|\mathcal{T}_f|}\sup_{\substack{\vu\in \mathcal{P}^p(e)^m \\ \;C:\nabla\vu\neq\tzero }}  \frac{\displaystyle\int_{\partial e} (\vun\cdot C:\nabla\vu)\cdot \nu_h^{-1}\ten{c}_{\vun}^{-1}\cdot(\vun\cdot C:\nabla\vu) \;ds}{ \displaystyle\int_e (\nabla\vu)^t : C : \nabla\vu \;d\vx},  \label{eq:penTerm_a} \\
\alpha_h|_{\partial e\cap f} &:= \frac{p(p+2)}{\min_{e\in\mathcal{T}_f} d_{e}}, \label{eq:penTerm_b}
\end{align}
\label{eq:penTerm}%
\end{subequations}
for all $e\in\mathcal{T}_h$, $f\subset\partial e$, where $p$ denotes the degree of the polynomial basis functions, $\mathcal{P}^p(e)$ denotes the space of polynomial functions of degree $p$ or less in element $e$, $\nu_h|_{\partial e\cap f}:=|f|/|e|$ is a scaling function of order $h^{-1}$, with $|e|,|f|$ the volume of $e$ and area of $f$, respectively, $\ten{c}_{\vun}^{-1}$ denotes the (pseudo)-inverse of the second-order tensor $\ten{c}_{\vun}:=\vun\cdot C\cdot\vun$, where $\vun$ is the outward pointing normal unit vector, and $d_{e}$ denotes the diameter of the inscribed sphere of $e$. Although the first version requires more preprocessing time, it allows for an approximately $1.5$ times larger time step \cite{geevers17}. 

We will refer to the SIPDG method with $p=1,2,3$ using the penalty term as defined by (\ref{eq:penTerm_a}) as DG1a, DG2a, and DG3a, respectively, and to the same methods using the penalty term as defined by (\ref{eq:penTerm_b}) as DG1b, DG2b, and DG3b.

\subsection{The Lax--Wendroff Time Integration Scheme}
To solve the resulting set of ODE's (\ref{eq:ODE}) in time, we use the Lax--Wendroff method \cite{lax64, deBasabe10}, which is based on Taylor expansions in time and substitutes the time derivatives by matrix-vector operators using the original equations (\ref{eq:ODE}). For the second-order formulation, the resulting scheme is also known as Dablain's scheme \cite{dablain86}. The advantage of this scheme is that it is time-reversible, energy-conservative, and only requires $K$ stages for a $2K$-order of accuracy.

To introduce the scheme, let $\Delta t>0$ denote the time step size, and let $\vvU_h(t_i)$ denote the approximation of $\vvu_h$ at time $t_i:=i\Delta t$ for $i=0,\dots, N_T$ with $N_T$ the total number of time steps. The order-$2K$ Lax--Wendroff method can be written as
\begin{align}
\vvU_h(t_{i+1}) &= -\vvU_h(t_{i-1}) + 2\sum_{k=0}^K\frac{1}{(2k)!} \Delta t^{2k}(\partial_t^{2k}\vvct{U}_h)(t_i), &&i=1,\dots, N_T-1,
\label{eq:LW}
\end{align}
with $\vvU_h(t_0)=\vvU_h(0):=\vvct{u_0}$ and $\vvU_h(t_1):=\sum_{k=0}^{2K+1} \frac{1}{k!} \Delta t^k(\partial_t^{k}\vvct{U}_h)(0)$, and where $(\partial_t^{k}\vvct{U}_h)(t_i)$ is recursively defined by
\begin{align*}
(\partial_t^{k}\vvct{U}_h)(0) &:= \begin{cases}
\vvct{u_0} & k=0, \\
\vvct{v_0} & k=1, \\
-M^{-1}A(\partial_t^{k-2}\vvU_h)(0)+\partial_t^{k-2}\vvct{f}(0) & k\geq 2,  
\end{cases}
\end{align*}
and
\begin{align*}
(\partial_t^{k}\vvct{U}_h)(t_i) &:= \begin{cases}
\vvU_h(t_i) & k=0, \\
-M^{-1}A(\partial_t^{k-2}\vvU_h)(t_i)+\partial_t^{k-2}\vvct{f}(t_i) & k=2,4,6,\dots,2K, 
\end{cases}
\end{align*}
for $i\geq 1$, with $\vvct{f}:=M^{-1}\vvct{f}^*$. In case $K=1$, this scheme reduces to the standard leap-frog or central difference scheme. When there is no source term, (\ref{eq:LW}) simplifies to
\begin{align}
\vvU_h(t_{i+1}) &= -\vvU_h(t_{i-1}) + 2\sum_{k=0}^K \frac{1}{(2k)!} \Delta t^{2k} (-M^{-1}A)^k\vvU_h(t_i),
\label{eq:LW0}
\end{align}
for $i=1,\dots,N_T-1$.

For the dispersion analysis, we will choose $K$ equal to the polynomial degree $p$ of the spatial discretization, since this will result in a $2p$-order convergence rate of the dispersion error as shown in Section \ref{sec:results}.

\section{Dispersion Analysis}
\label{sec:dispAn}

A common measure for the quality of a numerical method for wave propagation modelling is the amount of numerical dispersion and dissipation. Numerical dispersion refers in this context to the discrepancy between the numerical and physical wave propagation speed and numerical dissipation is the loss of energy in the numerical scheme. Since the schemes that we consider are all energy-conservative, they do not suffer from numerical dissipation. However, when projecting a physical wave onto the discrete space, this results in a superposition of a well-matching numerical wave and several numerical waves that have a completely different shape and frequency. We compute the number of these non-matching or spurious waves and refer to it as the eigenvector error, since it is related to the accuracy of the eigenvectors of $M^{-1}A$, while the dispersion error is related to the accuracy of the eigenvalues of $M^{-1}A$.

\begin{figure}[h]
\centering
\begin{subfigure}[b]{0.45\textwidth}
  \includegraphics[width=\textwidth]{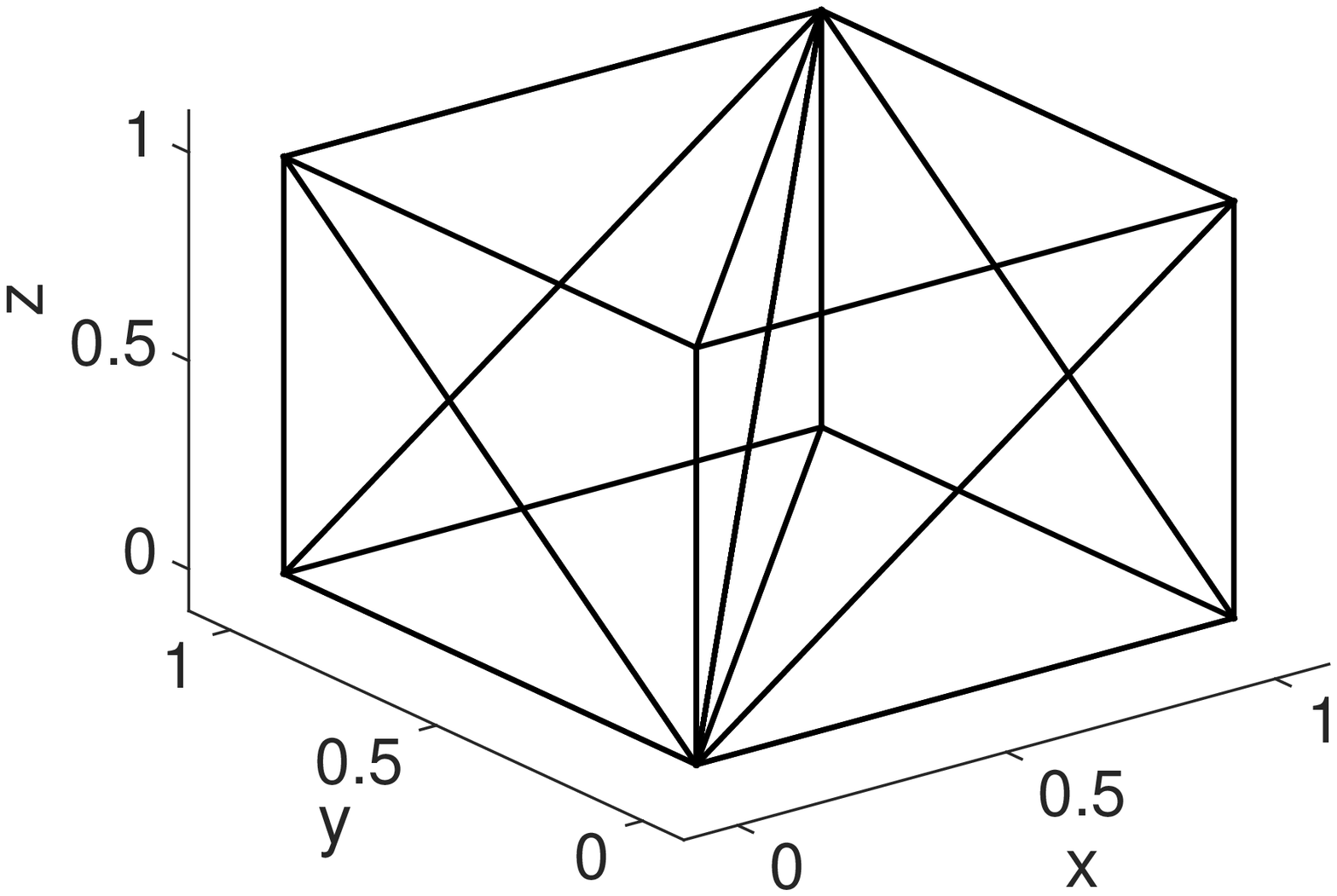} 
\end{subfigure} \,\,
\begin{subfigure}[b]{0.45\textwidth}
  \includegraphics[width=\textwidth]{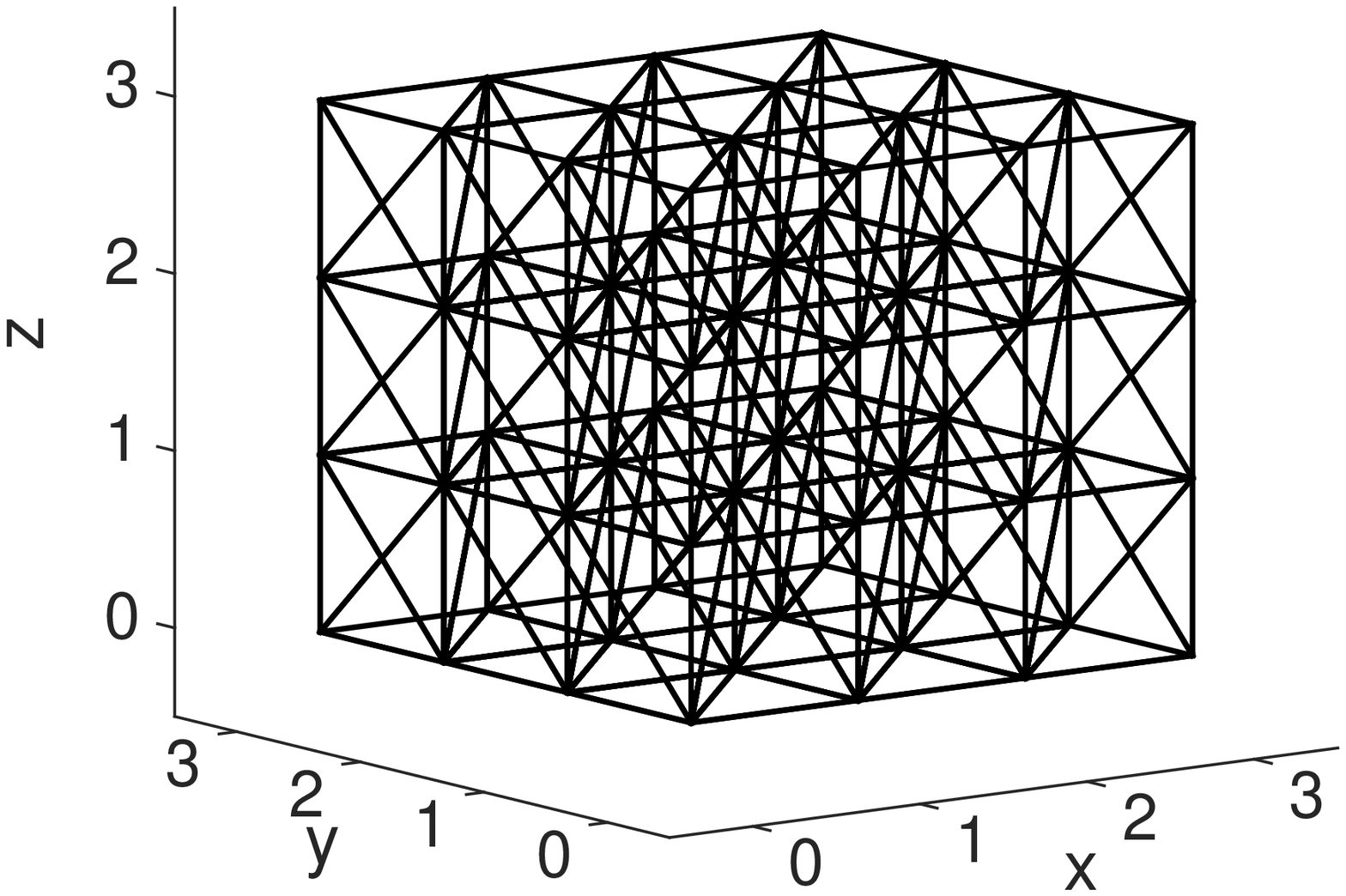} 
\end{subfigure}
\caption{Unit cell subdivided into tetrahedra (left), and periodic mesh made from $3\times 3\times 3$ copies of this unit cell (right).}
\label{fig:mesh}
\end{figure}

We analyse the dispersion and eigenvector error using standard Fourier analysis, which is also known in this context as plane wave analysis. The main idea of this analysis is to compare physical plane waves with numerical plane waves on a homogeneous periodic domain, free from external forces, using a periodic mesh. To obtain a periodic tetrahedral mesh we subdivide a small cell into tetrahedra and repeat this pattern to fill the entire domain as illustrated in Figure \ref{fig:mesh}. By using Fourier modes, we can then efficiently compute the numerical plane waves and their dispersion properties by solving eigenvalue problems on only a single cell.

Our analysis is similar to \cite{deBasabe07}, but with the following extensions:
\begin{itemize}
  \item We extend the analysis to parallellepiped cells, since this allows for a more regular tetrahedral mesh.
  \item We also compute the number of spurious modes that appear in the projection of the physical wave.
  \item In the three-dimensional elastic case, there are two distinct secondary or shear waves with the same wave vector. To compute the dispersion and eigenvector error in this case, we consider the two best matching numerical plane waves.
\end{itemize}
 
We explain the dispersion analysis in more detail in the following subsections. First, we show how we can derive an analytical expression for the numerical plane waves using Fourier modes. After that, we show how we use this to compute the numerical dispersion and eigenvector error. In the last subsection we explain how we estimate the computational cost for each method.

\subsection{Analytic Expression for the Numerical Plane Waves}
We first consider a periodic cubic domain of the form $\Omega:=[0,N)^3$, with $N$ a positive integer, and later extend the results to parallelepiped domains which allow for more regular tetrahedral meshes. The physical plane wave has the following form:
\begin{align}
\vu(\vx,t) &= \vct{a}e^{\im (\vkappa\cdot\vx -\omega t)}, &&\vx\in\Omega, t\in[0,T],
\label{eq:physWave}
\end{align}
where $\im:=\sqrt{-1}$ is the imaginary number, $\vkappa\in\mathbb{R}^3$ is the wave vector, $\omega\in\mathbb{R}$ is the angular velocity, and $\vct{a}\in\mathbb{R}^m$ is the amplitude vector. The wave vector must be of the form $\vkappa=\vkappa_{\vct{z}}=\frac{2\pi}{N}\vct{z}$, with $\vct{z}\in\mathbb{Z}_N^3$, in order to satisfy the periodic boundary conditions. 

The numerical plane wave can be written in a similar form when using a periodic mesh. To obtain a periodic tetrahedral mesh, we subdivide the unit cell $\Omega_0:=[0,1)^3$ into tetrahedra and repeat this pattern $N\times N\times N$ times to fill the entire domain as illustrated in Figure \ref{fig:mesh}. We equip the mesh with a translation-invariant set of basis functions where each basis function has minimal support. In case of mass-lumping we use nodal basis functions and in case of DG we use basis functions that have support on only a single element. The numerical plane wave $\vvU_h$ of the fully discrete scheme then has the form
\begin{align}
\vvU_{h}(\Omega_{\vct{k}},t_i) &= \vvU_{h,\Omega_0}e^{\im (\vkappa\cdot\vx_{\vct{k}} -\omega_h t)}, &&i=0,\dots,N_T, \vct{k}\in\mathbb{Z}_N^3.
\label{eq:numWave}
\end{align}
Here, $\vvU_h(\Omega_\vct{k},t_i)$ denotes the coefficients of the basis functions corresponding to cell $\Omega_{\vct{k}}:=\vct{k}+\Omega_0$ at time $t_i$. In case of mass-lumping, these basis functions are the nodal basis functions corresponding to the nodes on $\Omega_{\vct{k}}=\vct{k}+[0,1)^3$, while in case of DG, these are the basis functions that have support on one of the tetrahedra in $\Omega_{\vct{k}}$. The vector $\vvU_{h,\Omega_0}\in\mathbb{R}^{n_0}$ denotes the basis function coefficients corresponding to cell $\Omega_0$ at time $0$ and $\vx_{\vct{k}}=\vct{k}$ are the coordinates of the front-left-bottom vertex of cell $\Omega_{\vct{k}}$. 

To show that this is indeed a numerical plane wave, let $M^{(\Omega_\vct{k},\Omega_\vct{m})}, A^{(\Omega_\vct{k},\Omega_\vct{m})} \in\mathbb{R}^{n_{0}\times n_{0}}$, for $\vct{k},\vct{m}\in\mathbb{Z}_N^3$, be submatrices of $M$ and $A$, respectively, defined as follows:
\begin{align*}
M^{(\Omega_\vct{k},\Omega_\vct{m})}_{ij} &:= \left(\rho \vw^{(\Omega_\vct{k},i)}, \vw^{(\Omega_\vct{m},j)}\right)_h, &&i,j=1,\dots,n_{0},  \\
A^{(\Omega_\vct{k},\Omega_\vct{m})}_{ij} &:= a_h\left( \vw^{(\Omega_\vct{k},i)}, \vw^{(\Omega_\vct{m},j)}\right), &&i,j=1,\dots,n_{0},
\end{align*}
where $\{\vw^{(\Omega_{\vct{k}},i)}\}_{i=0}^{n_0}$ denote the basis functions corresponding to cell $\Omega_{\vct{k}}$ and where $a_h = a_h^{(DG)}$ and $(\cdot,\cdot)_h = (\cdot,\cdot)$ in case of the DG method and $a_h=a$ and $(\cdot,\cdot)_h=(\cdot,\cdot)_h^{(L)}$ in case of the mass-lumped method. 

Since the basis functions are translation invariant, the submatrices $M^{(\Omega_\vct{k},\Omega_{\vct{k}+\Delta\vct{k}})}$ and  $A^{(\Omega_\vct{k},\Omega_{\vct{k}+\Delta\vct{k}})}$ are the same for any $\vct{k}\in\mathbb{Z}_N^3$ with $\Delta\vct{k}\in\mathbb{Z}_N^3$ fixed. Furthermore, the submatrices $M^{(\Omega_\vct{k},\Omega_\vct{m})}$ are only non-zero when $\vct{k}=\vct{m}$, since the mass matrix is diagonal in case of mass-lumping and block-diagonal, with each block corresponding to an element, in case of DG. The submatrices $A^{(\Omega_\vct{k},\Omega_{\vct{k}+\Delta\vct{k}})}$ are only non-zero when $\Delta\vct{k}\in\{-1,0,1\}^3$, since the nodal basis functions for mass-lumping and the local basis functions for DG do not interact when they are two or more cells apart. This implies that we only need to consider the submatrices $M^{(\Omega_0)}:=M^{(\Omega_0,\Omega_0)}$ and $A^{(\Omega_0,\Omega_{\Delta\vct{k}})}$ for $\Delta\vct{k}=\{-1,0,1\}^3$.

Now let $\vkappa=\vkappa_\vct{z}:=\frac{2\pi}{N}\vct{z}$, for some $\vct{z}\in\mathbb{Z}_N^3$, and let $\vvct{U}_{h,0}\in\mathbb{R}^{N^3\times n_0}$ be the numerical wave at time $t=0$:
\begin{align}
\vvct{U}_{h,0}(\Omega_{\vct{k}}) &:= \vvct{U}_{h,\Omega_0}e^{\im (\vkappa\cdot\vx_{\vct{k}})}, && \vct{k}\in\mathbb{Z}_N^3.
\label{eq:eigVec}
\end{align}
Then $M^{-1}A\vvct{U}_{h,0}$ satisfies 
\begin{align*}
\left(M^{-1}A\vvct{U}_{h,0}\right)(\Omega_{\vct{k}}) &= M^{(\Omega_0)}_{inv} \left(\sum_{\Delta\vct{k}\in\{-1,0,1\}^3} A^{(\Omega_0,\Omega_{\Delta\vct{k}})}\vvct{U}_{h,0}(\Omega_{\vct{k}+\Delta\vct{k}})\right) \\
&=M^{(\Omega_0)}_{inv} \left(\sum_{\Delta\vct{k}\in\{-1,0,1\}^3} e^{\im(\vkappa\cdot\vx_{\Delta\vct{k}})}A^{(\Omega_0,\Omega_{\Delta\vct{k}})}\right) \vvct{U}_{h,\Omega_0}e^{\im (\vkappa\cdot\vx_{\vct{k}})} \\
&= M^{(\Omega_0)}_{inv}A^{(\vkappa)}\vvct{U}_{h,\Omega_0}e^{\im (\vkappa\cdot\vx_{\vct{k}})},
\end{align*}
for all $\vct{k}\in\mathbb{Z}_N^3$, with $M^{(\Omega_0)}_{inv}$ the inverse of $M^{(\Omega_0)}$ and
\begin{align*}
A^{(\vkappa)} &:= \sum_{\Delta\vct{k}\in\{-1,0,1\}^3} e^{\im(\vkappa\cdot\vx_{\Delta\vct{k}})}A^{(\Omega_0,\Omega_{\Delta\vct{k}})}.
\end{align*}
This implies that if $(s_h,\vvct{U}_{h,\Omega_0})$ is an eigenpair of $S^{(\vkappa)}:=M^{(\Omega_0)}_{inv}A^{(\vkappa)}$, then $(s_h,\vvct{U}_{h,0})$ is an eigenpair of $M^{-1}A$. In other words, we can obtain eigenpairs of $M^{-1}A$ by computing the eigenpairs of a small matrix $S^{(\vkappa)}\in\mathbb{R}^{n_0\times n_0}$. Note that since $M^{(\Omega_0)}$ is symmetric positive definite, and $A^{(\vkappa)}$ is Hermitian, $S^{(\vkappa)}$ has $n_0$ distinct eigenpairs. Since there are $N^3$ choices for $\vct{z}\in\mathbb{Z}_N^3$ and $S^{(\vkappa_\vct{z})}$ has $n_0$ eigenpairs, we can obtain all of the $N^3\times n_0$ eigenpairs of $M^{-1}A$ in this way.

Now consider the numerical plane wave in (\ref{eq:numWave}) with $(s_h,\vvU_{h,\Omega_0})$ an eigenpair of $S^{(\vkappa)}$, so with $(s_h,\vvct{U}_{h,0})$ an eigenpair of $M^{-1}A$. We can rewrite $\vvU_h$ as $\vvU_{h}(t)=\vvU_{h,0}e^{-\im(\omega t)}$. If we then substitute this wave into (\ref{eq:LW0}) we obtain
\begin{align*}
\cos(\Delta t\omega_h)\vvU_h(t_i) &= \sum_{k=0}^K\frac{1}{(2k)!}(-\Delta t^2s_h)^{k}\vvU_h(t_i), &&i=1,\dots,n_T-1.
\end{align*}
From this, it follows that $\vvU_h$ in (\ref{eq:numWave}) is a discrete plane wave if $(s_h,\vvU_{h,\Omega_0})$ is an eigenpair of $S^{(\vkappa)}$ and if $\omega_h$ satisfies $\cos(\Delta t\omega_h) =\sum_{k=0}^K\frac{1}{(2k)!}(-\Delta t^{2}s_h)^{k}$, so if
\begin{align}
\omega_h &=\pm\frac{1}{\Delta t}\arccos\left(\sum_{k=0}^K\frac{1}{(2k)!}(-\Delta t^{2}s_h)^{k}\right).
\label{eq:omgh}
\end{align}

It remains to determine the time step size $\Delta t$. In the appendix we show that the numerical scheme is stable, if 
\begin{align}
\Delta t\leq \sqrt{c_K/\sigma_{max}(M^{-1}A)},
\label{eq:CFL}
\end{align}
where  $\sigma_{max}(M^{-1}A)$ denotes the spectral radius of $M^{-1}A$ and $c_K$ is a constant, given by
\begin{align}
c_K := \inf\left\{x\geq 0 \;|\; \left|\sum_{k=0}^K \frac{1}{(2k)!}(-x)^k\right| > 1 \right\}.
\label{eq:cK}
\end{align}


To obtain a bound on the spectral radius, recall that we can write every eigenpair of $M^{-1}A$ in the form of $(s_h,\vvct{U}_{h,0})$, with $\vvct{U}_{h,0}$ given in (\ref{eq:eigVec}) and with $(s_h,\vvU_{h,\Omega_0})$ an eigenpair of $S^{(\vkappa_{\vct{z}})}$ for some $\vct{z}\in\mathbb{Z}_N^3$. This implies that $\sigma_{max}(M^{-1}A)$ is equal to $\sup_{\vct{z}\in\mathbb{Z}_N^3}\sigma_{max}(S^{(\vkappa_\vct{z})})$. We can therefore bound  $\sigma_{max}(M^{-1}A)$ as follows:
\begin{align}
\sigma_{max}(M^{-1}A) = \sup_{\vct{z}\in\mathbb{Z}_N^3}\sigma_{max}(S^{(\vkappa_\vct{z})}) \leq \sup_{\vkappa\in\mathcal{K}_0} \sigma_{max}(S^{(\vkappa)}) =:s_{h,max},
\label{eq:spectralBound}
\end{align}
with $\mathcal{K}_0:=[0,2\pi)^3\supset\{\vkappa_\vct{z}\}_{\vct{z}\in\mathbb{Z}_N^3}$ the space of all distinct wave vectors $\vkappa$. 

The constants $c_K$ can be computed numerically. For example, $c_K=4,12,7.57$ for $K=1,2,3$, respectively. For higher values of $K$, see, for example, \cite{mulder13}, where his $\sigma_t$ satisfies $c_K=2\sigma_t$.

We can extend the results of this section to parallelepiped cells by applying a linear transformation $\vct{x}\rightarrow\ten{T}\cdot\vct{x}$, with $\ten{T}\in\mathbb{R}^{3\times 3}$ a second-order tensor. The parallelepiped domain is then given by $\Omega=\ten{T}\cdot(0,N)^3$ and the cells are given by $\Omega_0=\ten{T}\cdot[0,1)^3$ and $\Omega_{\vct{k}}=\vct{x}_{\vct{k}} + \Omega_0$, with $\vct{x}_{\vct{k}}=\ten{T}\cdot \vct{k}$ the front-left-bottom vertex. The wave vectors $\vkappa_\vct{z}$ are of the form $\vkappa_{\vct{z}}=\frac{2\pi}{N}(\ten{T}^{-t}\cdot\vct{z})$ and the wave vector space $\mathcal{K}_0$ is given by $\mathcal{K}_0:=\ten{T}^{-t}\cdot[0,2\pi)^3$, with $\ten{T}^{-t}$ the transposed inverse of $\ten{T}$.

\subsection{Computing the Dispersion and Eigenvector Error }
To explain how we compute the dispersion error, we first consider the acoustic wave equation. Let $\vkappa$ be a given wave vector and let $\vu^{(\vkappa)}$ be the acoustic plane wave given by $\vu^{(\vkappa)}(\vx,t)=e^{\im(\vkappa\cdot\vx-\omega t)}$. The angular velocity is given by $\omega=\pm c|\vkappa|$ with $c$ the acoustic wave propagation speed. We compare this plane wave with the numerical plane waves.

To do this, we use the results from the previous subsection. There, we showed that for any eigenpair $(s_h,\vvU_{h,\Omega_0})$ of $S^{(\vkappa)}$ we can obtain a numerical plane wave in the form of (\ref{eq:numWave}) with angular velocity $\pm\omega_h$  given by (\ref{eq:omgh}). Since $S^{(\vkappa)}$ has $n_0$ eigenpairs, this means we can obtain $n_0$ discrete plane waves $\{\vvU^{(\vkappa,i)}_h\}_{i=1}^{n_0}$, with angular velocities $\{\pm\omega^{(\vkappa,i)}_{h}\}_{i=1}^{n_0}$, for a given wave vector $\vkappa$. The corresponding wave propagation speeds $\{c^{(\vkappa,i)}_{h}\}_{i=1}^{n_0}$ can be computed by $c^{(\vkappa,i)}_{h}=|\omega^{(\vkappa,i)}_{h}|/|\vkappa|$ and we can order the numerical plane waves such that 
\begin{align*}
|c-c^{(\vkappa,1)}_{h}|\leq|c-c^{(\vkappa,2)}_{h}|\leq\dots.
\end{align*}
We consider $\vvU^{(\vkappa,1)}_h$ to be the matching numerical plane wave and $\vvU^{(\vkappa,i)}_h$, with $i>1$, to be spurious modes. We then define the dispersion error as follows
\begin{align*}
e_{disp}(\vkappa) &= \frac{|c-c_{h}^{(\vkappa,1)}|}{c}.
\end{align*}

The complete procedure for computing $e_{disp}(\vkappa)$ in the acoustic case is given by
\begin{enumerate}[a.]
  \item Compute all eigenpairs $(s_h^{(\vkappa,i)}, \vvU_{h,\Omega_0}^{(\vkappa,i)})$ of $S^{(\vkappa)}:=M^{(\Omega_0)}_{inv}A^{(\vkappa)}$.
  \item Compute the angular velocities $\omega_{h}^{(\vkappa,i)}=\frac{1}{\Delta t}\arccos\left(\sum_{k=0}^K \frac{1}{(2k)!}(-\Delta t^2s_{h}^{(\vkappa,i)})^k\right)$.
  \item Compute the wave propagation speeds $c_h^{(\vkappa,i)}=\omega_{h}^{(\vkappa,i)}/|\vkappa|$ and order everything such that $|c-c^{(\vkappa,1)}_{h}|\leq|c-c^{(\vkappa,2)}_{h}|\leq\dots$.
  \item Compute $e_{disp}(\vkappa)=|c-c_h^{(\vkappa,1)}|/c$.
\end{enumerate}

Now let $\vu_0^{(\vkappa)}(\vx):=\vu^{(\vkappa)}(\vx,0)=e^{\im(\vkappa\cdot\vx)}$ be the acoustic plane wave at $t=0$. Also, let $\vvu^{(\vkappa)}_0$ be the projection of $\vu_0^{(\vkappa)}$ onto the numerical space, and let $\vvU_{h,0}^{(\vkappa,i)}$ be the discrete plane wave at $t=0$. In the ideal case, $\vvu_0^{(\vkappa)}$ is equal to $\vvU_{h,0}^{(\vkappa,1)}$ up to a constant. In most cases, however, the projection $\vvu_0^{(\vkappa)}$ is a superposition of a well-matching plane wave $\vvU_{h,0}^{(\vkappa,1)}$ and several other plane waves $\vvU_{h,0}^{(\vkappa,i)}$, for $i>1$, that may have a completely different shape and velocity. We can compute the number of these spurious waves by computing the projection error. 

To do this, we let $\vvU^{(\vkappa)}_0\in\mathrm{span}\{\vvU_{h,0}^{(\vkappa,1)}\}$ denote the projection of $\vvu^{(\vkappa)}_0$ onto $\mathrm{span}\{\vvU_{h,0}^{(\vkappa,1)}\}$, such that $(\vvU^{(\vkappa)}_0,\vvU_{h,0}^{(\vkappa,1)})_M=(\vvu^{(\vkappa)}_0,\vvU_{h,0}^{(\vkappa,1)})_M$, with $(\vvu,\vvv)_M:=\vvu^tM\vvv$. We then define the projection error as
\begin{align*}
e_{vec}(\vkappa) := \frac{\|\vvu^{(\vkappa)}_0-\vvU_{0}^{(\vkappa)}\|_M}{\|\vvu_{0}^{(\vkappa)}\|_M},
\end{align*}
where $\|\vvu\|_M:=\sqrt{\vvu^tM\vvu}$. We refer to this as the eigenvector error, since it is related to the accuracy of $\vvU_{h,0}^{(\vkappa,1)}$, which is an eigenvector of $M^{-1}A$ \cite{mulder99}. 

Since the physical plane wave, the mesh, and the set of basis functions are all translation invariant, we can efficiently compute this error by only considering $\vu^{(\vkappa)}_{\Omega_0}$, the part of $\vu^{(\vkappa)}_0$ restricted to cell $\Omega_0$. We define $\vvu^{(\vkappa)}_{\Omega_0}$ to be the projection of $\vu^{(\vkappa)}_{\Omega_0}$ onto the discrete space restricted to $\Omega_0$ and define $\vvU^{(\vkappa)}_{\Omega_0}\in\mathrm{span}\{\vvU_{h,\Omega_0}^{(\vkappa,1)}\}$ the projection of $\vvu^{(\vkappa)}_{\Omega_0}$ onto $\mathrm{span}\{\vvU_{h,\Omega_0}^{(\vkappa,1)}\}$ such that $(\vvU^{(\vkappa)}_{\Omega_0},\vvU_{h,\Omega_0}^{(\vkappa,1)})_{M_0}=(\vvu^{(\vkappa)}_{\Omega_0},\vvU_{h,\Omega_0}^{(\vkappa,1)})_{M_0}$, with $M_0:=M^{(\Omega_0)}$. We can then compute $e_{vec}(\vkappa)$ by
\begin{align*}
e_{vec}(\vkappa)=\frac{\|\vvu^{(\vkappa)}_{\Omega_0}-\vvU_{\Omega_0}^{(\vkappa)}\|_{M_0}}{\|\vvu_{\Omega_0}^{(\vkappa)}\|_{M_0}}.
\end{align*}

The complete procedure for computing $e_{vec}(\vkappa)$ in the acoustic case is given by
\begin{enumerate}[A.]
  \item Compute all eigenpairs $(s_h^{(\vkappa,i)}, \vvU_{h,\Omega_0}^{(\vkappa,i)})$ of $S^{(\vkappa)}:=M^{(\Omega_0)}_{inv}A^{(\vkappa)}$.
  \item Compute the angular velocities $\omega_{h}^{(\vkappa,i)}=\frac{1}{\Delta t}\arccos\left(\sum_{k=0}^K \frac{1}{(2k)!}(-\Delta t^2s_{h}^{(\vkappa,i)})^k\right)$.
  \item Compute the wave propagation speeds $c_h^{(\vkappa,i)}=\omega_{h}^{(\vkappa,i)}/|\vkappa|$ and order everything such that $|c-c^{(\vkappa,1)}_{h}|\leq|c-c^{(\vkappa,2)}_{h}|\leq\dots$.
  \item Compute $\vvu^{(\vkappa)}_{\Omega_0}$: the projection of $\vu^{(\vkappa)}_{\Omega_0}$ onto the discrete space of cell $\Omega_0$.
  \item Compute $\vvU^{(\vkappa)}_{\Omega_0}$: the projection of $\vvu^{(\vkappa)}_{\Omega_0}$ onto $\mathrm{span}\{\vvU_{h,\Omega_0}^{(\vkappa,1)}\}$
  \item Compute $e_{vec}(\vkappa)={\|\vvu^{(\vkappa)}_{\Omega_0}-\vvU_{\Omega_0}^{(\vkappa)}\|_{M_0}}/{\|\vvu_{\Omega_0}^{(\vkappa)}\|_{M_0}}$.
\end{enumerate}

For the isotropic elastic case, the procedure is very similar. Let $\vkappa$ be the wave vector and let $\vu^{(\vkappa)}$ denote the elastic plane wave of the form $\vu^{(\vkappa)}(\vx,t)=\vct{a}e^{\im(\vkappa\cdot\vx-\omega t)}$, with $\vct{a}$ the amplitude vector, $\omega=\pm c|\vkappa|$ the angular velocity, and $c$ the elastic wave propagation speed. In the elastic isotropic case, we have to distinguish between longitudinal or primary waves, where $\vct{a}$ is parallel with $\vkappa$ and the propagation speed is $c=c_P=\sqrt{(\lambda+2\mu)/\rho}$, and transversal, shear or secondary waves, where $\vct{a}$ is perpendicular to $\vkappa$ and the propagation speed is $c=c_S=\sqrt{\mu/\rho}$. 

For the analysis, we will only consider the secondary wave, since the wavelength $\lambda=2\pi/|\kappa|=2\pi c/\omega$ of this wave is shorter and therefore governs the required mesh resolution. In 3D, there are two linear independent amplitude vectors, $\vct{a}^{(\vkappa,1)}$ and $\vct{a}^{(\vkappa,2)}$, that are perpendicular to $\vkappa$ and we will refer to the corresponding secondary plane waves as $\vvu^{(\vkappa,1)}$ and $\vvu^{(\vkappa,2)}$. We will compare these physical plane waves with the numerical plane waves in a similar way as for the acoustic case.

Since, for a given $\vkappa$ and $\omega=\pm c_S|\vkappa|$, there are two linearly independent secondary waves, we compare the secondary wave velocity $c=c_S$ with the wave propagation speed of the two best matching numerical plane waves. In particular, we define the dispersion error as
\begin{align*}
e_{disp}(\vkappa) &= \frac{|c-c_{h}^{(\vkappa,2)}|}{c}.
\end{align*}

The procedures for computing this error is the same as for the acoustic case, with step d replaced by
\begin{enumerate}
  \item[d*.] Compute $e_{disp}(\vkappa) = {|c-c_{h}^{(\vkappa,2)}|}/{c}.$
\end{enumerate}

The eigenvector is now computed by
\begin{align*}
e_{vec}(\vkappa) = \sup_{\vvu^{(\vkappa)}_{\Omega_0}\in\mathrm{span}\{\vvu^{(\vkappa,1)}_{\Omega_0},\vvu^{(\vkappa,2)}_{\Omega_0}\}} \frac{\|\vvu^{(\vkappa)}_{\Omega_0}-\vvU_{\Omega_0}^{(\vkappa)}\|_{M_0}}{\|\vvu_{\Omega_0}^{(\vkappa)}\|_{M_0}},
\end{align*}
where $\vvu^{(\vkappa,i)}_{\Omega_0}$ is the projection of $\vu^{(\vkappa,i)}_{\Omega_0}$ onto the discrete space of cell $\Omega_0$, and $\vvU_{\Omega_0}^{(\vkappa)}\in\mathrm{span}\{\vvU_{h,\Omega_0}^{(\vkappa,1)}, \vvU_{h,\Omega_0}^{(\vkappa,2)}\}$ is the projection of $\vvu_{\Omega_0}^{(\vkappa)}\in\mathrm{span}\{\vvu^{(\vkappa,1)}_{\Omega_0},\vvu^{(\vkappa,2)}_{\Omega_0}\}$ onto $\mathrm{span}\{\vvU_{h,\Omega_0}^{(\vkappa,1)}, \vvU_{h,\Omega_0}^{(\vkappa,2)}\}$. In other words, we compute the worst possible projection error for a linear combination of $\vvu^{(\vkappa,1)}_{\Omega_0}$ and $\vvu^{(\vkappa,2)}_{\Omega_0}$ projected onto the span of the two best-matching numerical plane waves $\vvU_{h,\Omega_0}^{(\vkappa,1)}$ and $\vvU_{h,\Omega_0}^{(\vkappa,2)}$. We can efficiently compute this by
\begin{align*}
e_{vec}(\vkappa) = \sqrt{\sigma_{max}(B^{-1}R)},
\end{align*}
where $\sigma_{max}(B^{-1}R)$ denotes the largest eigenvalue of $B^{-1}R$ and $B,R\in\mathbb{R}^{2\times 2}$ are matrices given by $B_{ij}:=(\vvu_{\Omega_0}^{(\vkappa,i)},\vvu_{\Omega_0}^{(\vkappa,j)})_{M_0}$ and $R_{ij}:=(\vvu^{(\vkappa,i)}_{\Omega_0}-\vvU_{\Omega_0}^{(\vkappa,i)},\vvu^{(\vkappa,j)}_{\Omega_0}-\vvU_{\Omega_0}^{(\vkappa,j)})_{M_0}$, with $\vvU_{\Omega_0}^{(\vkappa,i)}$ the projection of $\vvu_{\Omega_0}^{(\vkappa,i)}$ onto $\mathrm{span}\{\vvU_{h,\Omega_0}^{(\vkappa,1)}, \vvU_{h,\Omega_0}^{(\vkappa,2)}\}$.

The procedure for computing $e_{vec}(\vkappa) $ is the same as for the acoustic case, with steps D-F replaced by
\begin{enumerate}
  \item[D*.] Compute $\vvu^{(\vkappa,i)}_{\Omega_0}$: the projection of $\vu^{(\vkappa,i)}_{\Omega_0}$ onto the discrete space of cell $\Omega_0$, for $i=1,2$.
  \item[E*.] Compute $\vvU^{(\vkappa,i)}_{\Omega_0}$: the projection of $\vvu^{(\vkappa,i)}_{\Omega_0}$ onto $\mathrm{span}\{\vvU_{h,\Omega_0}^{(\vkappa,1)}, \vvU_{h,\Omega_0}^{(\vkappa,2)}\}$, for $i=1,2$.
  \item[F*.] Compute $B_{ij}:=(\vvu_{\Omega_0}^{(\vkappa,i)},\vvu_{\Omega_0}^{(\vkappa,j)})_{M_0}$ and $R_{ij}:=(\vvu^{(\vkappa,i)}_{\Omega_0}-\vvU_{\Omega_0}^{(\vkappa,i)},\vvu^{(\vkappa,j)}_{\Omega_0}-\vvU_{\Omega_0}^{(\vkappa,j)})_{M_0}$, for $i,j=1,2$, and use this to compute $e_{vec}(\vkappa)=\sqrt{\sigma_{max}(B^{-1}R)}$.
\end{enumerate}

So far, we only considered the dispersion error and eigenvector error for a given wave vector $\vkappa$. For a given wavelength $\lambda=2\pi/|\vkappa|$, we define the dispersion and eigenvector error as the worst case among all wave vectors of length $|\vkappa|=\lambda/(2\pi)$, so among wave vectors in all possible directions:
\begin{subequations}
\begin{align}
e_{disp}(\lambda) &:= \sup_{\vkappa\in\mathbb{R}^3, \;|\vkappa|=\lambda/(2\pi)} e_{disp}(\vkappa), \\
 e_{vec}(\lambda) &:= \sup_{\vkappa\in\mathbb{R}^3, \;|\vkappa|=\lambda/(2\pi)} e_{vec}(\vkappa).
\end{align}
\label{eq:errLambda}%
\end{subequations}
We can use these errors to determine the required number of elements per wavelength. To compute these errors, we use a search algorithm, which requires the computation of $e_{disp}(\vkappa)$ and $e_{vec}(\vkappa)$ for a large number of wave vectors $\vkappa$. The complete procedure for computing the dispersion and eigenvector error is given by:
\begin{enumerate}
  \item Construct a cell $\Omega_0$ and subdivide it into tetrahedra.
  \item Compute the submatrices $M^{(\Omega_0)}$ and $A^{(\Omega_0,\Omega_{\Delta \vct{k}})}$ for $\Delta\vct{k}\in\{-1,0,1\}^3$.
  \item Compute $s_{h,max}$, given by (\ref{eq:spectralBound}). This is done with a search algorithm which requires the computation of $ \sigma_{max}(S^{(\vkappa)})$, with $S^{(\vkappa)}:=M^{(\Omega_0)}_{inv}A^{(\vkappa)}$, for a large number of wave vectors $\vkappa$.
  \item Compute $\Delta t\leq \sqrt{c_K/s_{h,max}}$, with $c_K$ given by (\ref{eq:cK}).
  \item For a given wavelength $\lambda$, compute the errors $e_{disp}(\lambda)$ and $e_{vec}(\lambda)$ given in (\ref{eq:errLambda}). For each $\lambda$, this requires the computation of $e_{disp}(\vkappa)$ and $e_{vec}(\vkappa)$, using steps a-d and A-F, for a large number of wave vectors $\vkappa$.
\end{enumerate}

\subsection{Estimating the Computational Cost}
\label{sec:numComp}
To compare the efficiency of the different methods, we also compute the number of degrees of freedom $n_{vec}$, the number of non-zero entries of the stiffness matrix $n_{mat}$, and the estimated computational cost $n_{comp}$, for each wavelength $\lambda$.

We define $n_{vec}$ to be the number of degrees of freedom per $\lambda^3$-volume. This is computed by
\begin{align*}
n_{vec} &= n_0\frac{\lambda^3}{|\Omega_0|},
\end{align*}
where $n_0$ is the number of basis functions corresponding to cell $\Omega_0$,  and $|\Omega_0|$ is the volume of $\Omega_0$. 

We define $n_{mat}$ to be the number of non-zero entries of the stiffness matrix per $\lambda^3$-volume. In case of mass-lumping, we estimate this number by
\begin{align*}
n^{(ML)}_{mat} &= \left(\sum_{q\in\mathcal{Q}_{\Omega_0}} \sum_{q'\in\mathcal{N}(q)} |\mathcal{U}_q||\mathcal{U}_{q'}| \right) \frac{\lambda^3}{|\Omega_0|} ,
\end{align*}
where $|\mathcal{U}_q|$ is the number of degrees of freedom per node ($|\mathcal{U}_q|=1$ in the acoustic and $|\mathcal{U}_q|=3$ in the elastic case), $\mathcal{Q}_{\Omega_0}$ is the set of nodes on $\Omega_0$, and $\mathcal{N}(q)$ are the neighbouring nodes of $q$ that are connected with $q$ through an element. 

In case of the SIPDG method, we estimate this number by
\begin{align*}
n^{(DG)}_{mat} &= \left(\sum_{e\in\mathcal{T}_{\Omega_0}}\sum_{e'\in\mathcal{N}(e)}|\mathcal{U}_e||\mathcal{U}_{e'}| \right) \frac{\lambda^3}{|\Omega_0|} ,
\end{align*}
where $|\mathcal{U}_e|$ is the number of basis functions with support on element $e$, $\mathcal{T}_{\Omega_0}$ are the elements in $\Omega_0$, and $\mathcal{N}(e)$ are the neighbouring elements of $e$ that are connected with $e$ through a face.

To estimate the computational cost we look at the size of the matrix times the number of matrix-vector products. The resulting estimates gives a rough estimate of the relative CPU time of the different methods, since it estimates the number of computations when using a globally assembled matrix. 

We define the computational cost $n_{comp}$ as the number of non-zero matrix entries per $\lambda^3$-volume times the number of matrix-vector products during one oscillation in time. The duration of one oscillation is $T_0=\lambda/c$, with $c$ the wave propagation speed. The number of matrix-vector products during one oscillation is the number of stages of the Lax--Wendroff scheme $K$ times the number of time steps $N_{\Delta t}=T_0/\Delta t=\lambda/(c\Delta t)$, where $\Delta t=\sqrt{c_K/s_{h,max}}$, with $c_K$ given by (\ref{eq:cK}) and $s_{h,max}$ given by (\ref{eq:spectralBound}). We use this to compute $n_{comp}$ as follows:
\begin{align*}
n_{comp} &= n_{mat} K N_{\Delta t}.
\end{align*}

\section{Results and Comparisons}
\label{sec:results}

\begin{table}[h]
\caption{Analysed finite element methods }
\label{tab:methods}
\begin{center}
\begin{tabular}{l || p{0.75\textwidth}}
Method & Description \\ \hline\hline
ML1& Linear mass-lumped  finite element method \\ \hline
ML2 & Degree-$2$ mass-lumped finite element method \cite{mulder96} \\ \hline
ML3a, ML3b & Degree-$3$ mass-lumped finite element methods \cite{chin99} \\ \hline
DGX & Symmetric Interior Penalty Discontinous Galerkin method \cite{grote06} of degree $X=1,2,3$ \\ \hline
DGXa & DGX with penalty term derived in \cite{geevers17} and given by (\ref{eq:penTerm_a}) \\ \hline
DGXb & DGX with penalty term derived in \cite{mulder14} and given by (\ref{eq:penTerm_b}) 
\end{tabular}
\end{center}
\end{table}

An overview of the different finite element methods that we analyse is given in Table \ref{tab:methods}. Each method is combined with an order-$2p$ Lax--Wendroff time integration scheme, where $p$ denotes the degree of the spatial discretization.

\begin{figure}[h]
\centering
\begin{subfigure}[b]{0.45\textwidth}
  \includegraphics[width=\textwidth]{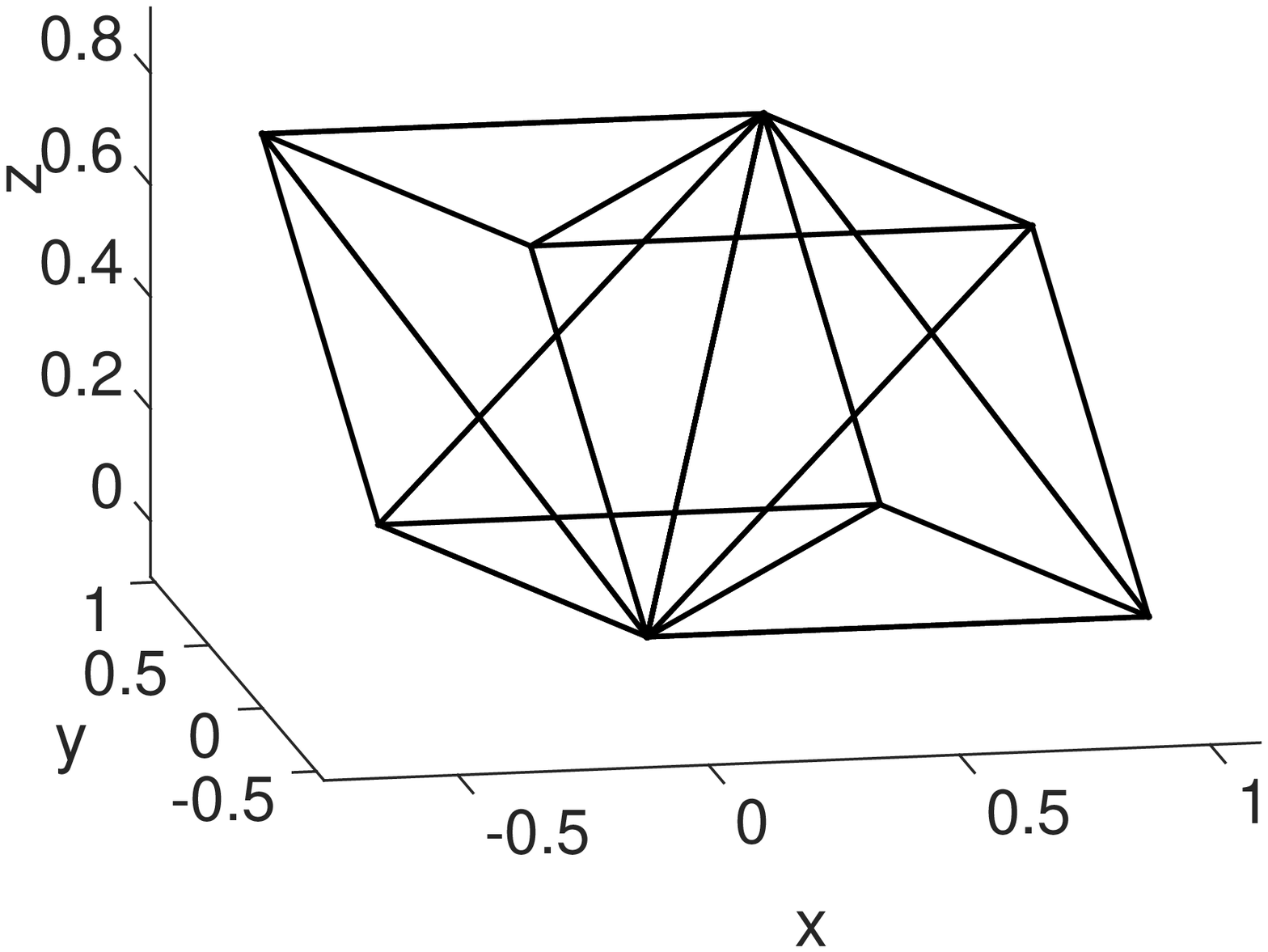}
\end{subfigure} \,\,
\begin{subfigure}[b]{0.45\textwidth}
  \includegraphics[width=\textwidth]{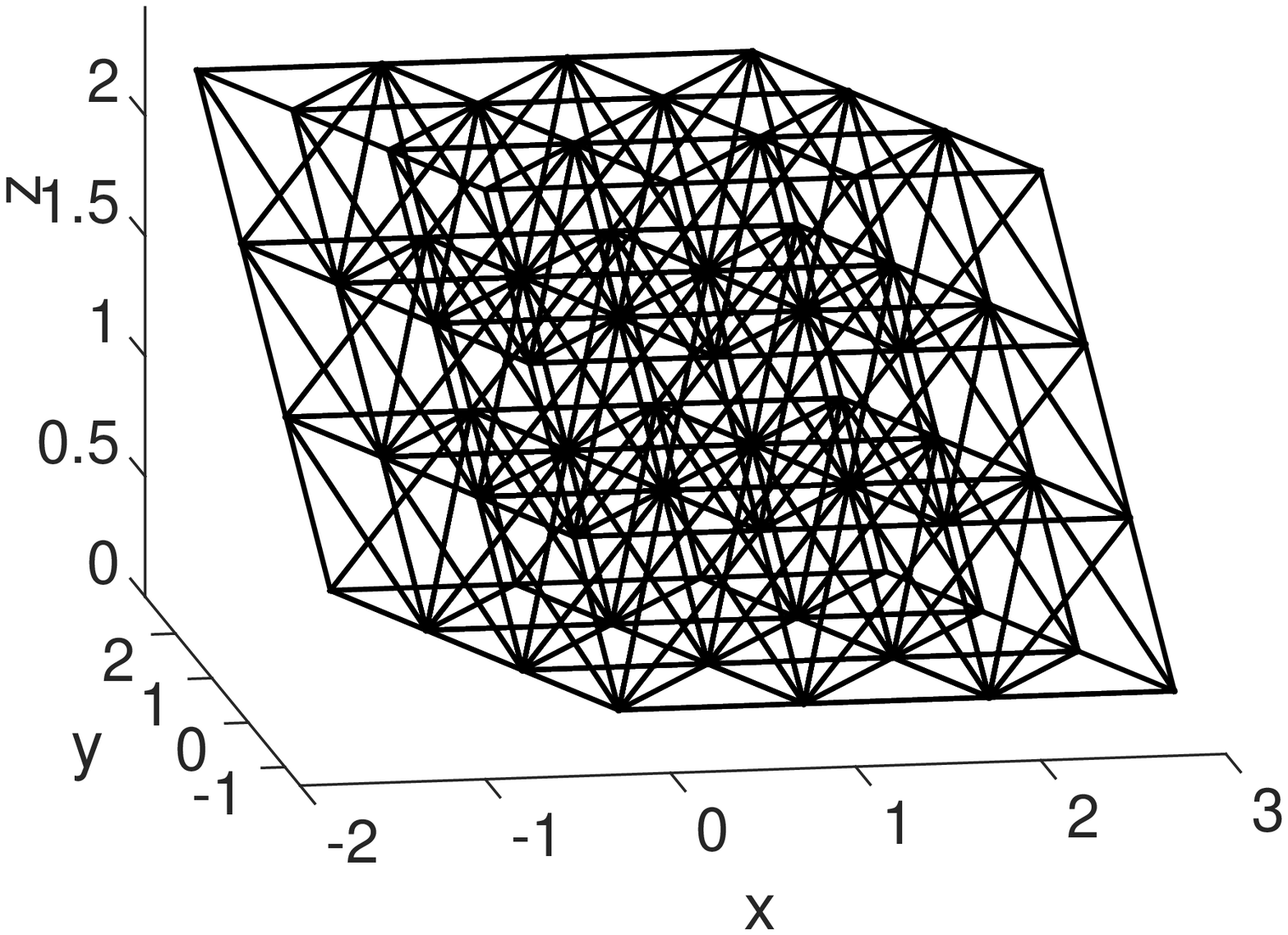}
\end{subfigure} 
\caption{Tetragonal disphenoid honeycomb restricted to cell $\Omega_0$ (left) and restricted to $3\times 3\times 3$ cells (right).}
\label{fig:tetra0}
\end{figure}

To analyse the dispersion properties of these methods, we use standard Fourier analysis, as explained in Section \ref{sec:dispAn}. We consider a periodic mesh of congruent nearly-regular equifacial tetrahedra, known as the tetragonal disphenoid honeycomb. To obtain this mesh, we slice the unit cell $\Omega_0:=[0,1)^3$ into 6 tetrahedra with the planes $x=y$, $x=z$, and $y=z$ and then apply the linear transformation $\vx\rightarrow\ten{T}\cdot\vx$, with 
\begin{align}
\ten{T} := \begin{bmatrix}
1 & -1/3 & -1/3 \\
0 & \sqrt{8/9} & -\sqrt{2/9} \\
0 & 0 & \sqrt{2/3}
\end{bmatrix}.
\label{eq:defT}
\end{align}
An illustration of this mesh is given in Figure \ref{fig:tetra0}.

\subsection{Acoustic Waves on a Regular Mesh}
We first consider the acoustic wave model with $c=\rho=1$. Figure \ref{fig:errAc1} illustrates the dispersion and eigenvector error with respect to the number of elements per wavelength $N_E:=\sqrt[3]{\lambda^3/|e|_{av}}$, with $\lambda$ the wavelength and $|e|_{av}$ the average element volume. The eigenvector error for ML1 is always zero, since it has only one degree of freedom per cell $\Omega_{\vct{k}}$ and therefore allows only one numerical plane wave for a given wave vector. From this figure we can obtain the order of convergence, which is $2p$ for the dispersion error and $p+1$ for the eigenvector error. These convergence rates are typical for symmetric finite element methods for eigenvalue problems, see, for example, \cite{boffi10} and the references therein. The $2p$-order superconvergence of the dispersion error is also in accordance with the results of \cite{mulder99, ainsworth06, deBasabe08}.

\begin{figure}[h]
\centering
\begin{subfigure}[b]{0.45\textwidth}
 \includegraphics[width=\textwidth]{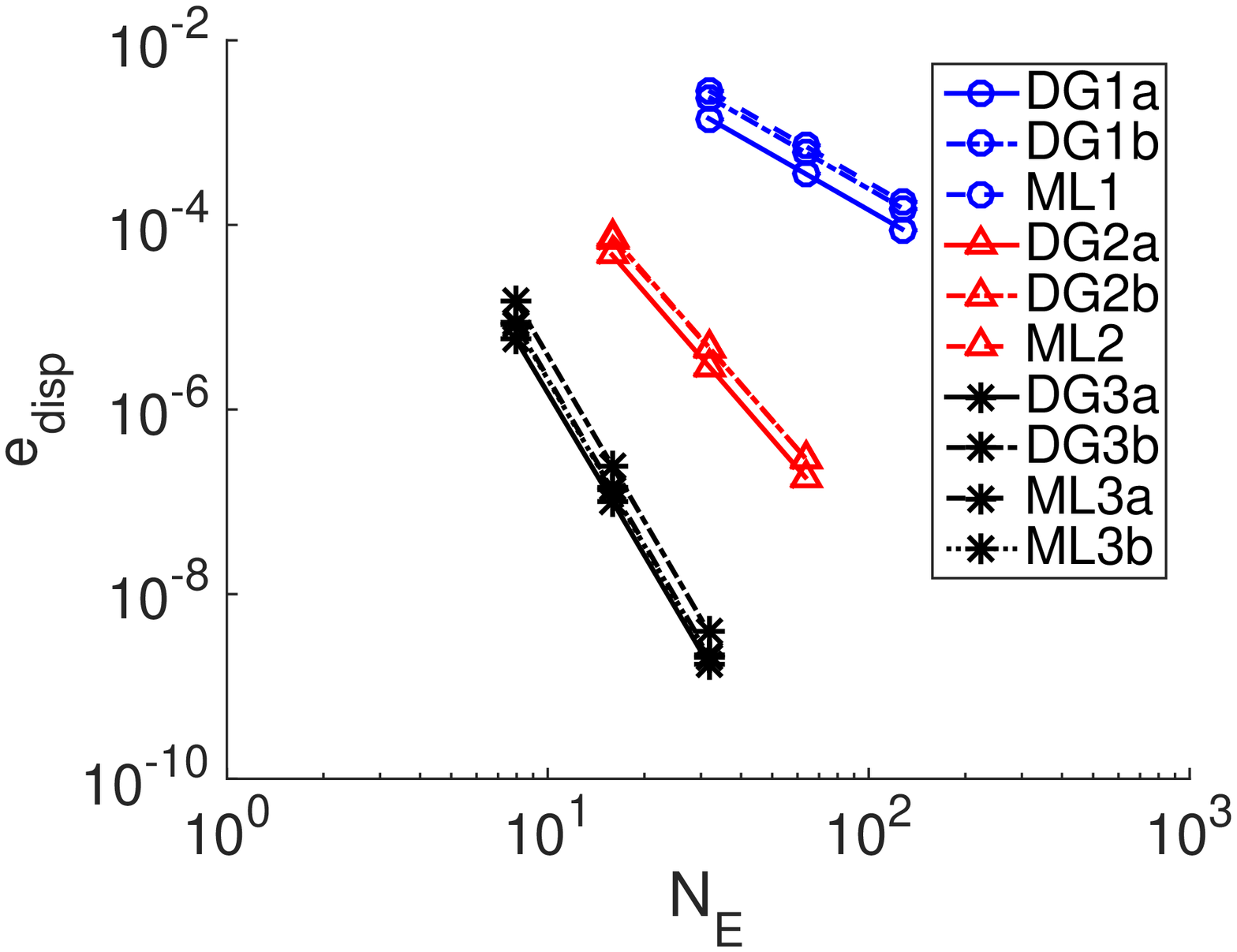}
 \end{subfigure} \,\,
\begin{subfigure}[b]{0.45\textwidth}
  \includegraphics[width=\textwidth]{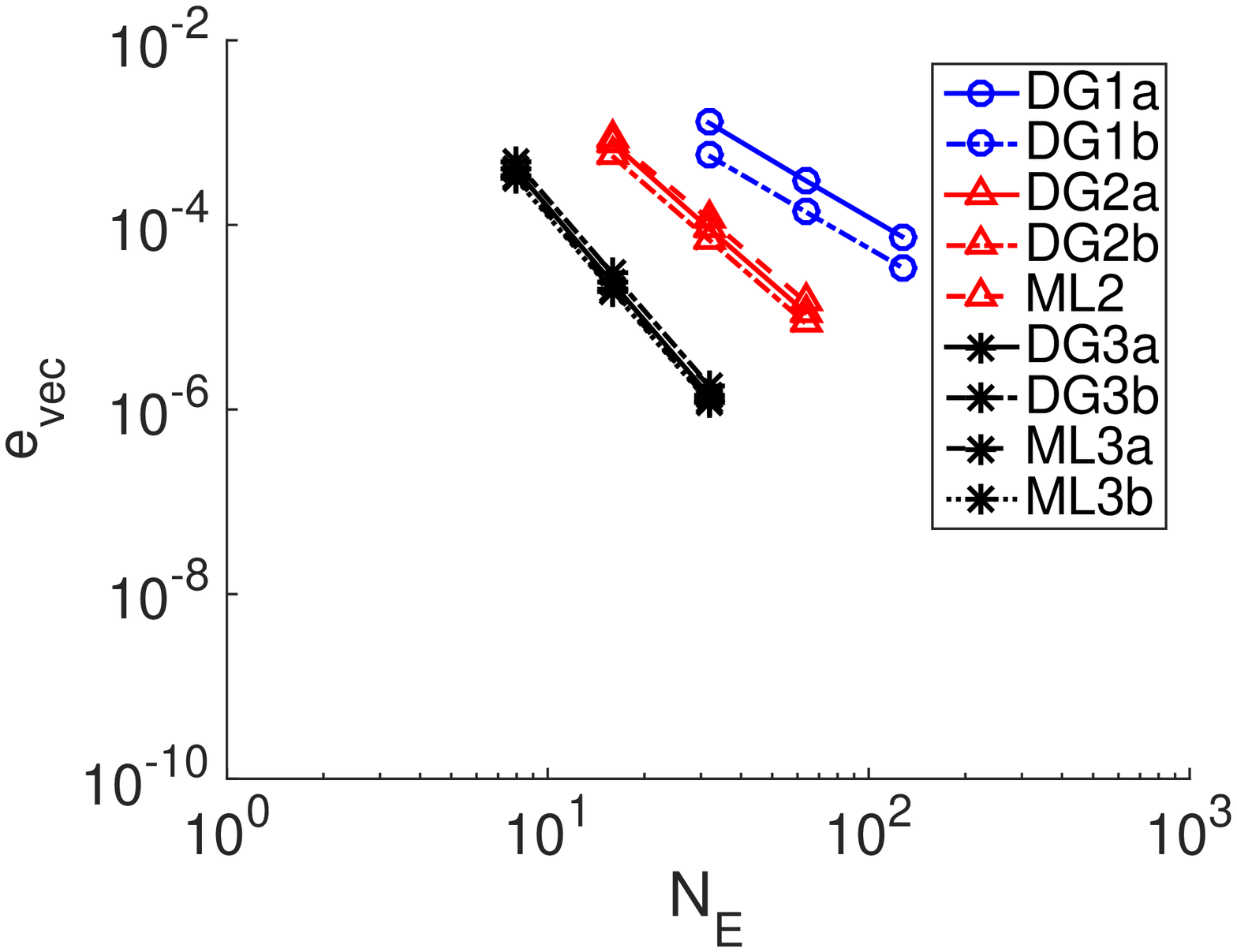}
\end{subfigure}
\caption{Dispersion error (left) and eigenvector error (right) for the acoustic wave model.}
\label{fig:errAc1}
\end{figure}

By extrapolating the results shown in Figure \ref{fig:errAc1} we can obtain approximations of the errors of the form $e=\alpha (N_E)^{-\beta}$, where $\alpha$ is the leading constant and $\beta$ is the order of convergence. The approximations are given in Table \ref{tab:errAc1}. 

\begin{table}[h]
\caption{Approximation of the dispersion and eigenvector error for the acoustic case.}
\label{tab:errAc1}
\begin{center}
{\tabulinesep=0.5mm
\begin{tabu}{c||c|c|}
Method	& $e_{disp}$ 	& $e_{vec}$  \\ \hline\hline
DG1a 	& $1.45(N_E)^{-2}$ & $1.20(N_E)^{-2}$ \\
DG1b 	& $2.46(N_E)^{-2}$ & $0.56(N_E)^{-2}$ \\
ML1 		& $2.87(N_E)^{-2}$ & $0$      \\ \hline
DG2a  	& $3.00(N_E)^{-4}$ & $2.89(N_E)^{-3}$ \\
DG2b 	& $4.83(N_E)^{-4}$ & $2.22(N_E)^{-3}$ \\
ML2  	& $4.82(N_E)^{-4}$ & $3.78(N_E)^{-3}$ \\ \hline
DG3a  	& $1.77(N_E)^{-6}$ & $1.46(N_E)^{-4}$ \\
DG3b  	& $3.98(N_E)^{-6}$ & $1.88(N_E)^{-4}$ \\
ML3a  	& $2.25(N_E)^{-6}$ & $1.26(N_E)^{-4}$ \\
ML3b  	& $2.15(N_E)^{-6}$ & $1.22(N_E)^{-4}$ \\
\end{tabu}}
\end{center}
\end{table}

We can use these results to obtain estimates for the number of elements per wavelength required for a given accuracy, but we can also use them to obtain other properties, such as the number of time steps or the computational cost required for a given accuracy. An overview for a dispersion error of $0.01$ and $0.001$ is given in Table \ref{tab:errDispAc1} and \ref{tab:errDispAc01}, respectively, and the relation between the accuracy and the computational cost is illustrated in Figure \ref{fig:errDispAcComp1}. 

\begin{figure}[h]
\centering
\includegraphics[width=0.6\textwidth]{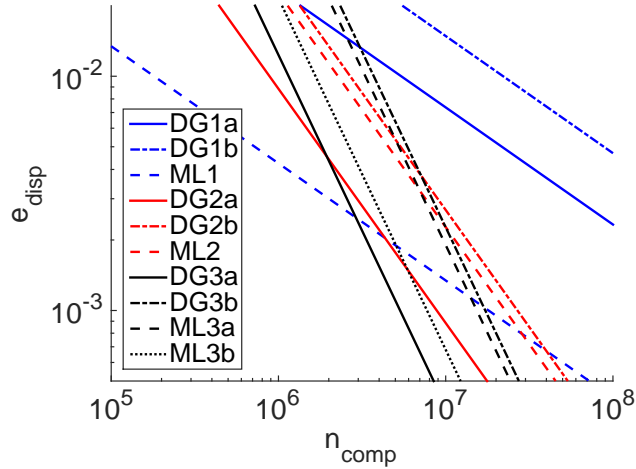}
\caption{Dispersion error of different finite element methods for the acoustic wave model plotted against the estimated computational cost.}
\label{fig:errDispAcComp1}
\end{figure}

\begin{table}[h]
\caption{Number of elements per wavelength $N_E$, number of degrees of freedom $n_{vec}$, size of the global matrix $n_{mat}$, number of time steps $N_{\Delta t}$, computational cost $n_{comp}$ and eigenvector error $e_{vec}$ for a dispersion error of $0.01$ for different finite element methods for the acoustic wave model. The numbers are accurate up to two decimal places. }
\label{tab:errDispAc1}
\begin{center}
{\tabulinesep=0.5mm
\begin{tabu}{l|| r|r|r|r|r|| l}
  & \multicolumn{6}{c}{$e_{disp}=0.01$}  \\ \hline
Method	& $N_E$ 	& $n_{vec}$ & $n_{mat}$ 		& $N_{\Delta t}$ & $n_{comp}$  	& $e_{vec}$ \\ \hline\hline
DG1a 	& $12\phantom{.0}$  & $7000$	& $140\times 10^3$ 	& $39$ 	& $5.4\phantom{0}\times 10^{6}$ 	& $0.0083$  \\
DG1b 	& $16\phantom{.0}$  &$15000$	& $310\times 10^3$ 	& $72$ 	& $22\phantom{.00}\times 10^6$ 	& $0.0023$  \\
ML1 		& $17\phantom{.0}$  & $810$	& $12\times 10^3$ 	& $15$ 	& $0.18\times 10^6$  			& $0$ \\ \hline
DG2a  	& $4.2$ 			& $720$  	& $36\times 10^3$ 	& $12$ 	& $0.88\times 10^6$ 			& $0.040$ \\
DG2b 	& $4.7$ 			& $1000$ 	& $51\times 10^3$ 	& $26$ 	& $2.7\phantom{0}\times 10^6$ 	& $0.022$ \\
ML2  	& $4.7$ 			& $860$ 	& $39\times 10^3$ 	& $29$ 	& $2.3\phantom{0}\times 10^6$ 	& $0.037$  \\ \hline
DG3a  	& $2.4$ 			& $270$ 	& $27\times 10^3$ 	& $14$ 	& $1.1\phantom{0}\times 10^6$ 	& $0.046$ \\
DG3b  	& $2.7$ 			& $400$ 	& $40\times 10^3$ 	& $31$ 	& $3.7\phantom{0}\times 10^6$ 	& $0.035$ \\
ML3a  	& $2.5$ 			& $370$ 	& $31\times 10^3$ 	& $36$ 	& $3.3\phantom{0}\times 10^6$ 	& $0.034$ \\
ML3b  	& $2.4$ 			& $360$ 	& $30\times 10^3$ 	& $18$ 	& $1.7\phantom{0}\times 10^6$ 	& $0.034$ \\
\end{tabu}}
\end{center}
\end{table}

\begin{table}[h]
\caption{Same as Table \ref{tab:errDispAc1}, but for a dispersion error of $0.001$. }
\label{tab:errDispAc01}
\begin{center}
{\tabulinesep=0.5mm
\begin{tabu}{l|| r|r|r|r|r|| l}
  & \multicolumn{6}{c}{$e_{disp}=0.001$}  \\ \hline
Method	& $N_E$ 	& $n_{vec}$ & $n_{mat}$ 		& $N_{\Delta t}$ & $n_{comp}$  	& $e_{vec}$ \\ \hline\hline
DG1a 	& $38\phantom{.0}$  & $220000$	& $4400\times 10^3$& $120$ 	& $540\phantom{.0}\times 10^6$ 	& $0.00083$  \\
DG1b 	& $50\phantom{.0}$  & $490000$	& $9700\times 10^3$& $230$ 	& $2200\phantom{.0}\times 10^6$ 	& $0.00023$  \\
ML1 		& $54\phantom{.0}$  & $26000$	& $390\times 10^3$ 	& $47$ 	& $18\phantom{.0}\times 10^6$  	& $0$ \\ \hline
DG2a  	& $7.4$ 			& $4100$  	& $200\times 10^3$ 	& $22$ 	& $8.8\times 10^6$ 				& $0.0071$ \\
DG2b 	& $8.3$ 			& $5800$ 		& $290\times 10^3$ 	& $46$ 	& $27\phantom{.0}\times 10^6$ 	& $0.0038$ \\
ML2  	& $8.3$ 			& $4800$ 		& $220\times 10^3$ 	& $52$ 	& $23\phantom{.0}\times 10^6$ 	& $0.0065$  \\ \hline
DG3a  	& $3.5$ 			& $840$ 		& $84\times 10^3$ 	& $21$ 	& $5.3\times 10^6$ 				& $0.010$ \\
DG3b  	& $4.0$ 			& $1300$ 		& $130\times 10^3$ 	& $46$ 	& $17\phantom{.0}\times 10^6$ 	& $0.0075$ \\
ML3a  	& $3.6$ 			& $1200$ 		& $98\times 10^3$ 	& $52$ 	& $15\phantom{.0}\times 10^6$ 	& $0.0074$ \\
ML3b  	& $3.6$ 			& $1100$ 		& $96\times 10^3$ 	& $27$ 	& $7.7\times 10^6$ 				& $0.0073$ \\
\end{tabu}}
\end{center}
\end{table}

Figure \ref{fig:errDispAcComp1} shows that for linear elements, the mass-lumped method ML1 is significantly more efficient than the DG methods DG1a and DG1b, while for quadratic elements, DG2a is significantly more efficient than ML2 and DG2b, and for cubic functions, DG3a is slightly more efficient than ML3b and significantly more efficient than DG3b and ML3a. In all cases, the DG methods using the sharper penalty term given by (\ref{eq:penTerm_a}) are significantly more efficient than those using the penalty term given by (\ref{eq:penTerm_b}). For a dispersion error of around $0.01$ and higher, the linear mass-lumped method ML1 performs best in terms of computational cost, while for a dispersion error below $0.001$ the best method is the DG method with cubic basis functions DG3a or the second degree-$3$ mass-lumped finite element method ML3b.

Tables \ref{tab:errDispAc1} and \ref{tab:errDispAc01} also show that for the case $p=1$, the eigenvector error is always smaller than the dispersion error, but that for higher-order elements, the eigenvector error can become almost $5$ times as large when the dispersion error is $0.01$ and $10$ times as large when the dispersion error is $0.001$. This is due to the fact that the dispersion error converges with a faster rate (order $2p$) than the eigenvector error (order $p+1$) for higher-degree methods.

\subsection{The Effect of Mesh Distortions}
We also investigate the effect of the mesh quality on the dispersion error. To do this, we first create meshes of very flat elements by scaling the regular disphenoid mesh in the $z$-direction. After that, we create distorted meshes by displacing some of the vertices of the disphenoid honeycomb.

\begin{figure}[h]
\centering
\begin{subfigure}[b]{0.45\textwidth}
  \includegraphics[width=\textwidth]{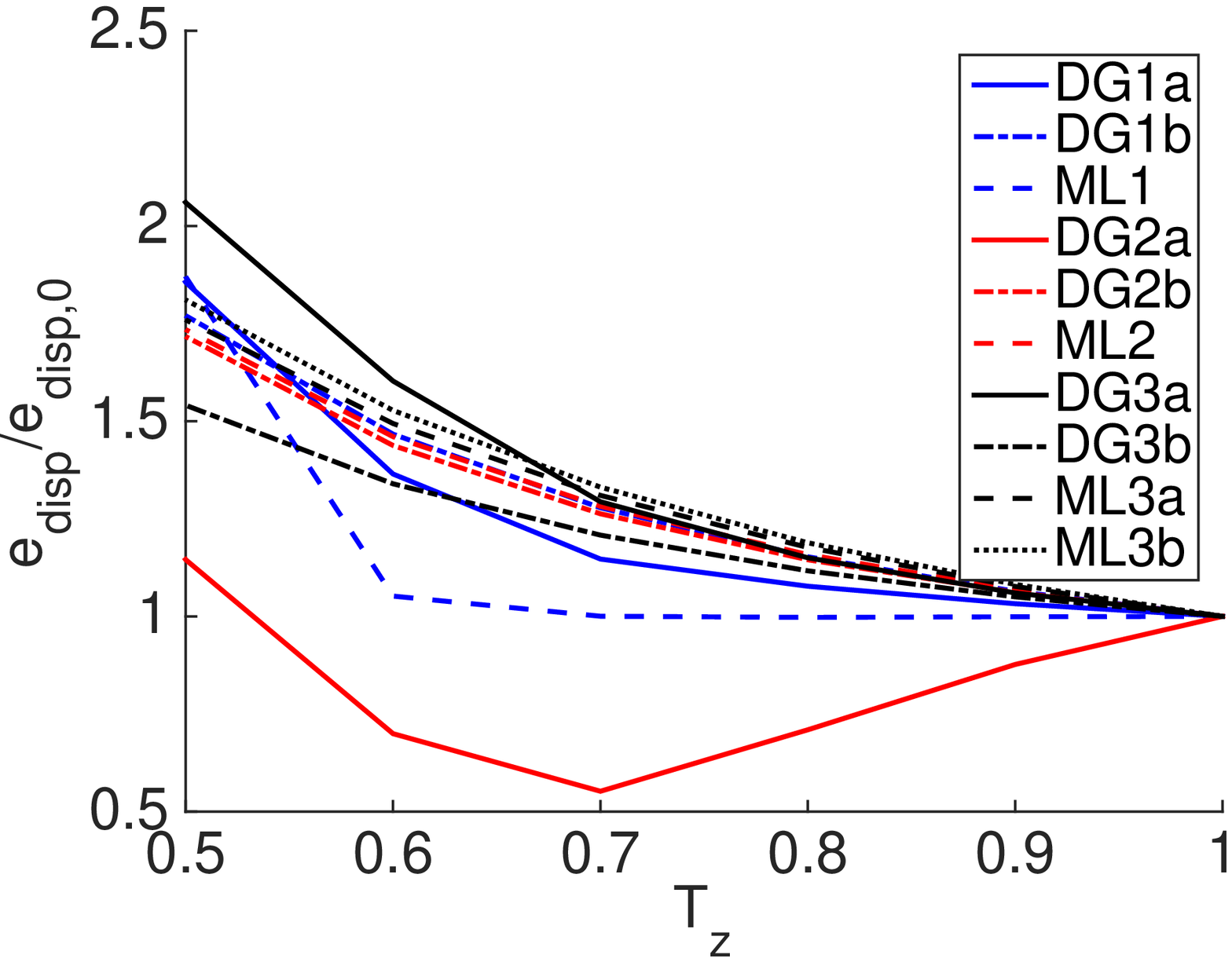}
\end{subfigure} \,\,
\begin{subfigure}[b]{0.45\textwidth}
  \includegraphics[width=\textwidth]{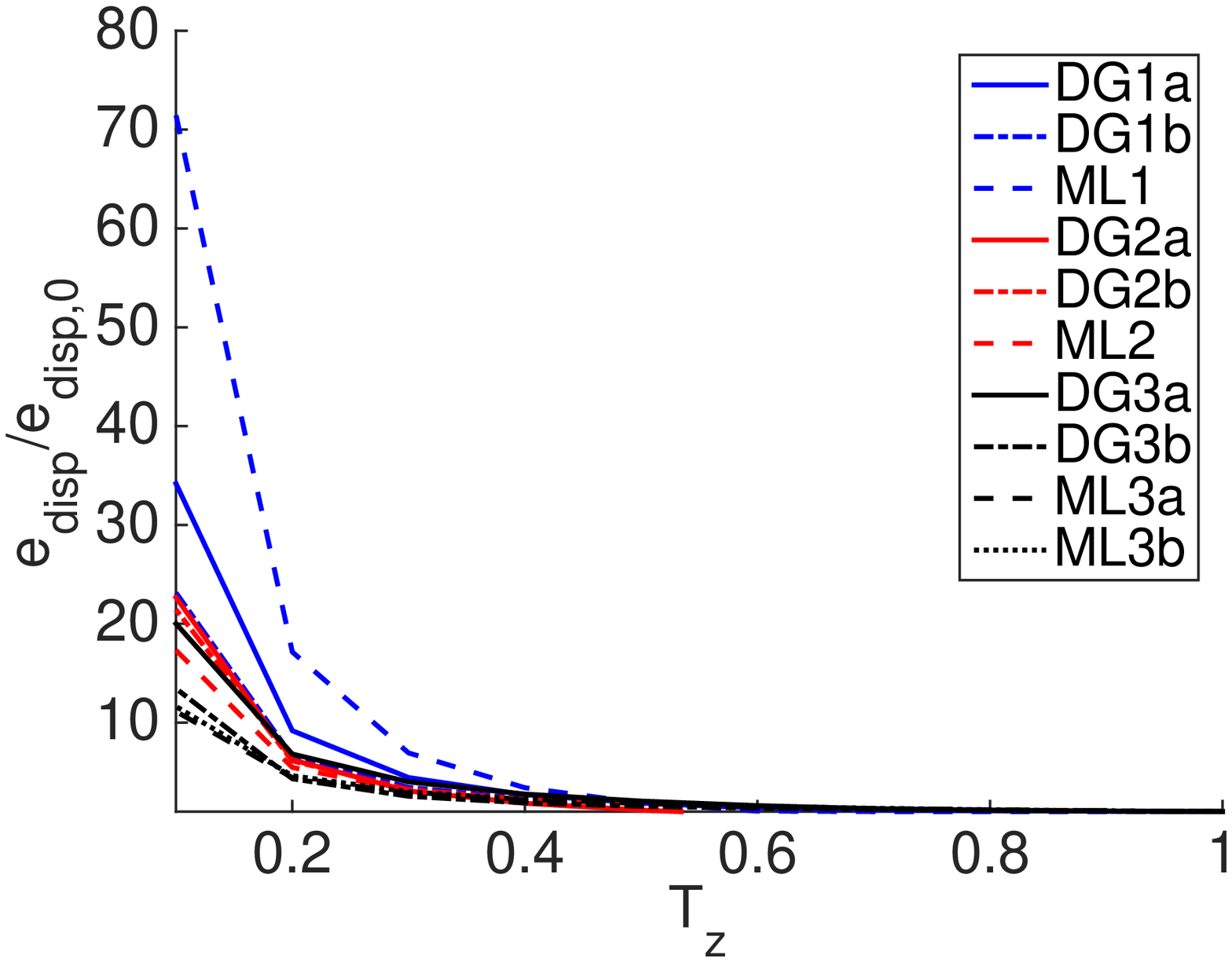}
\end{subfigure}
\caption{Relative dispersion error for the disphenoid mesh scaled in the $z$-direction by a factor $T_z$ for the acoustic wave model. Here, $e_{disp,0}$ denotes the error for the original mesh.}
\label{fig:errRelAc}
\end{figure}

To create flat elements, we scale the disphenoid mesh in the $z$-direction by a factor $T_z$. The effect on the dispersion error is illustrated in Figure \ref{fig:errRelAc}. For a mesh flattened by a factor $2$, the dispersion error does not grow more than a factor $2.5$, but flattening the mesh by a factor $10$ increases the error by a factor between $10$ and $100$. In all cases, the mesh resolution remains the same and even becomes smaller in the $z$-direction. This means that the mesh quality can have a strong effect on the accuracy of the method and that using flat tetrahedra can significantly reduce the accuracy. The methods using lower-order elements are more sensitive to the mesh quality than the higher-order methods.

\begin{figure}[h]
\centering
\begin{subfigure}[b]{0.45\textwidth}
  \includegraphics[width=\textwidth]{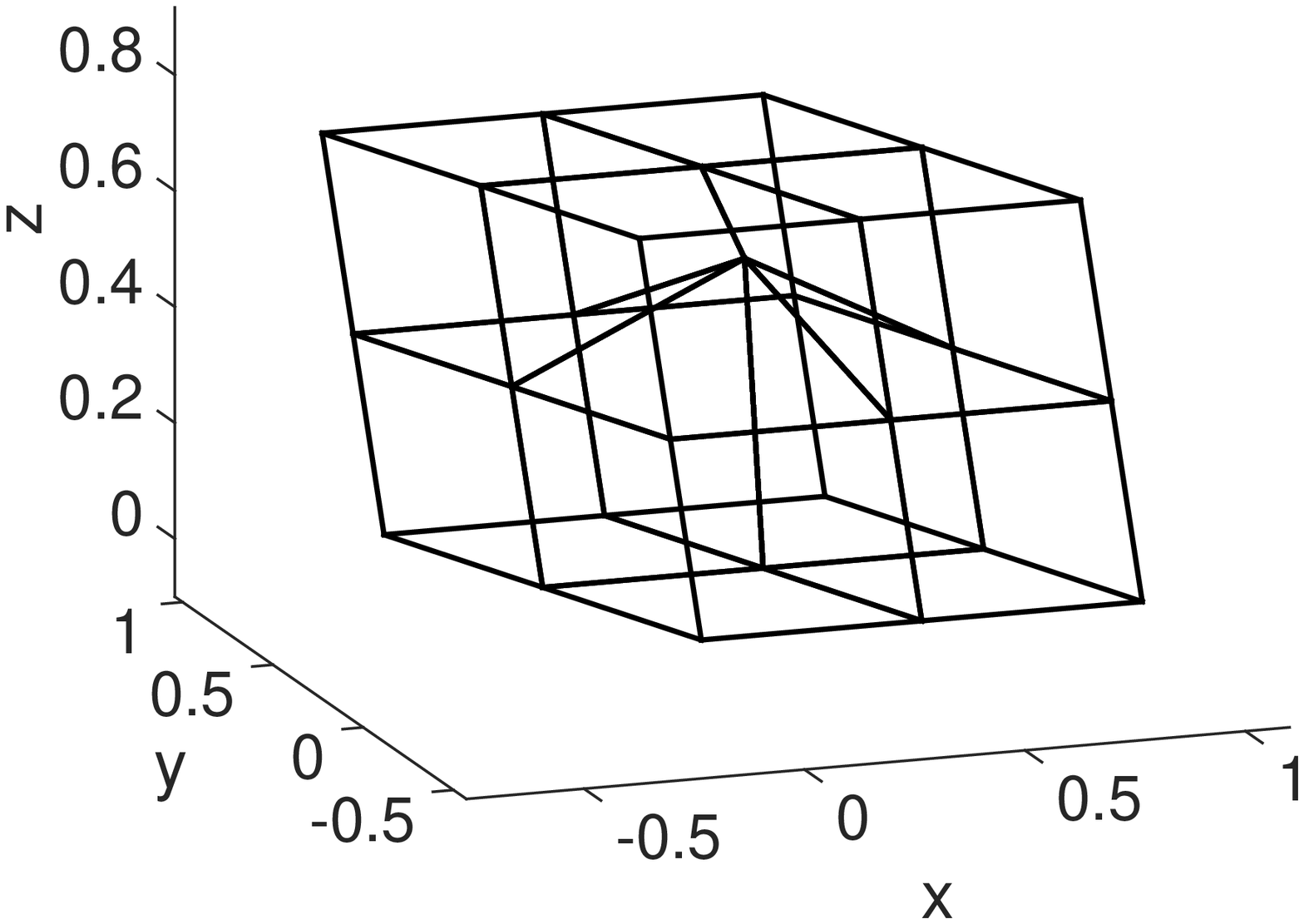}
\end{subfigure} \,\,
\begin{subfigure}[b]{0.45\textwidth}
  \includegraphics[width=\textwidth]{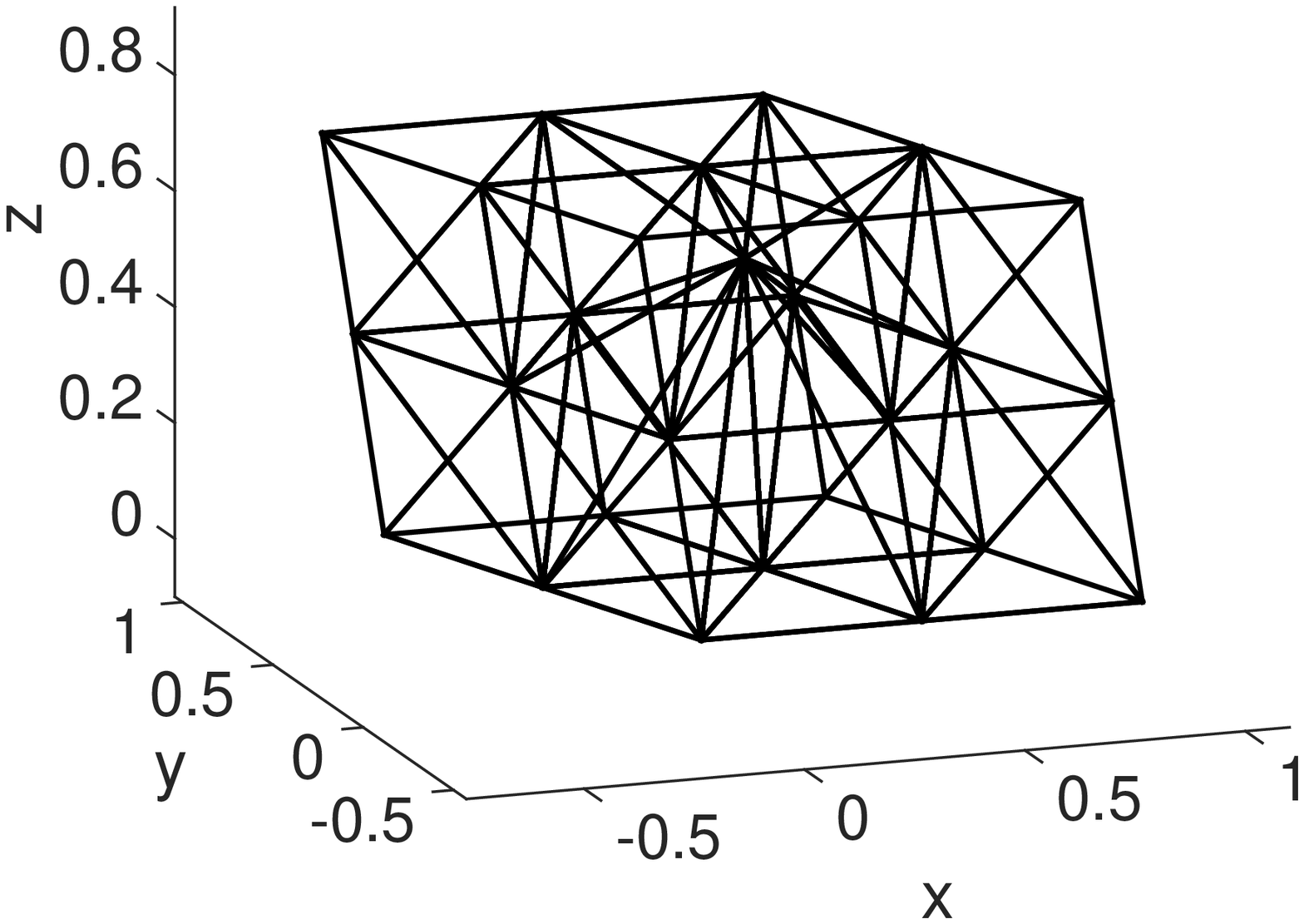}
\end{subfigure}
\caption{Repeated subcells with a small distortion (left) and corresponding tetrahedral mesh (right).}
\label{fig:meshDist}
\end{figure}

\begin{figure}[h]
\centering
\begin{subfigure}[b]{0.45\textwidth}
  \includegraphics[width=\textwidth]{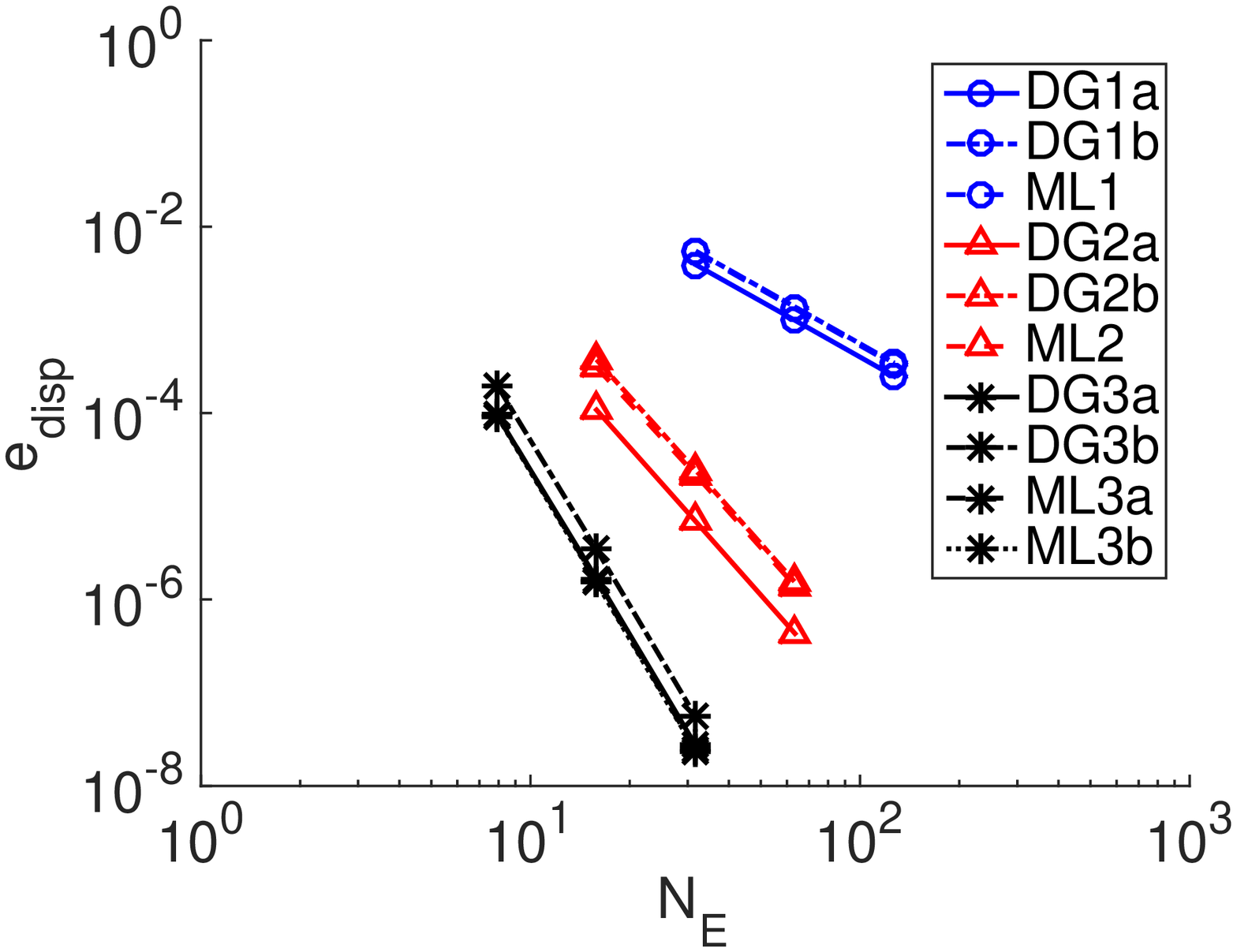}
\end{subfigure} \,\,
\begin{subfigure}[b]{0.45\textwidth}
  \includegraphics[width=\textwidth]{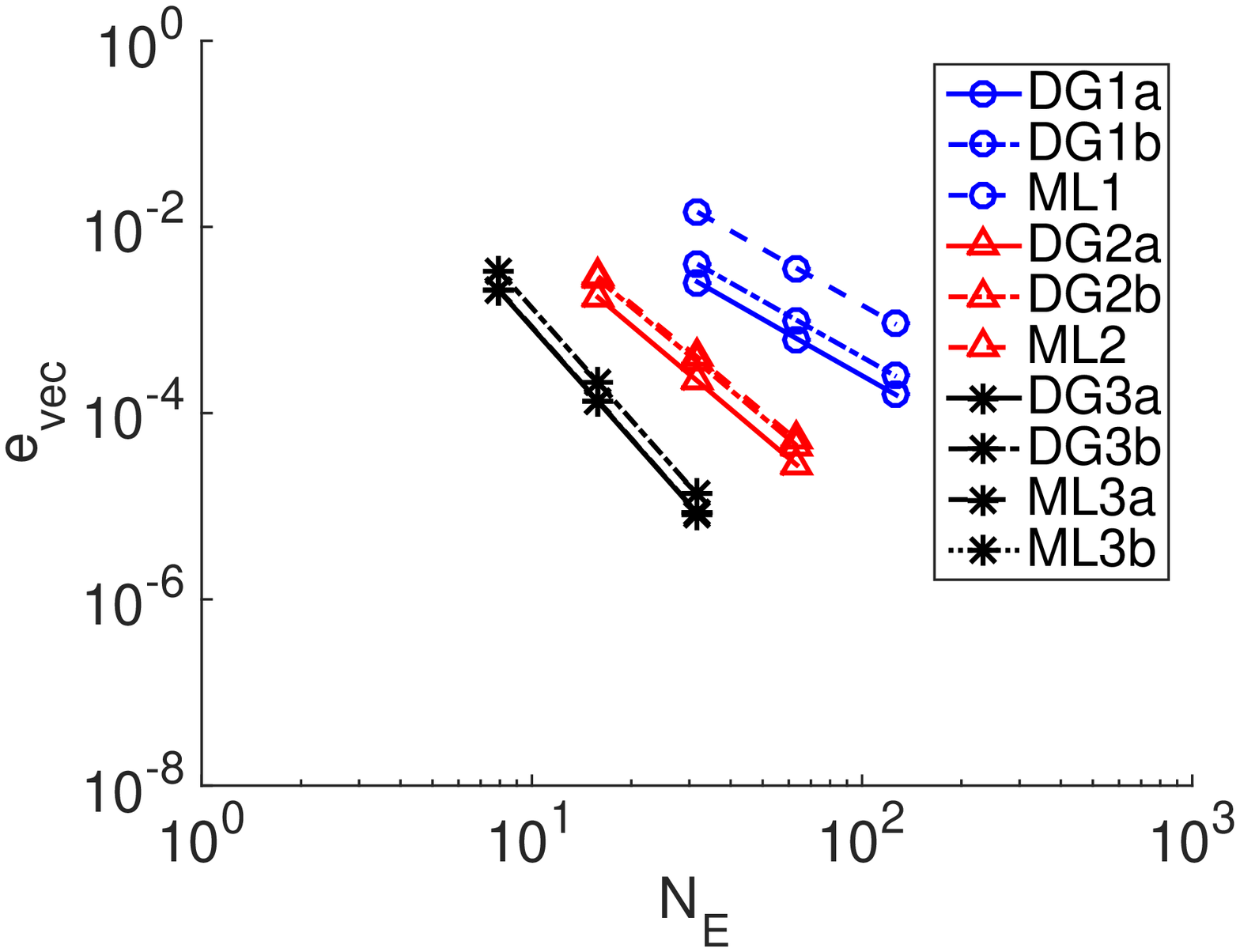}
\end{subfigure} 
\caption{Dispersion error (left) and eigenvector error (right) for the acoustic wave model for a distorted mesh with distortion $\delta=0.9$.}
\label{fig:errAcDist04}
\end{figure}

\begin{figure}[h]
\centering
\begin{subfigure}[b]{0.45\textwidth}
  \includegraphics[width=\textwidth]{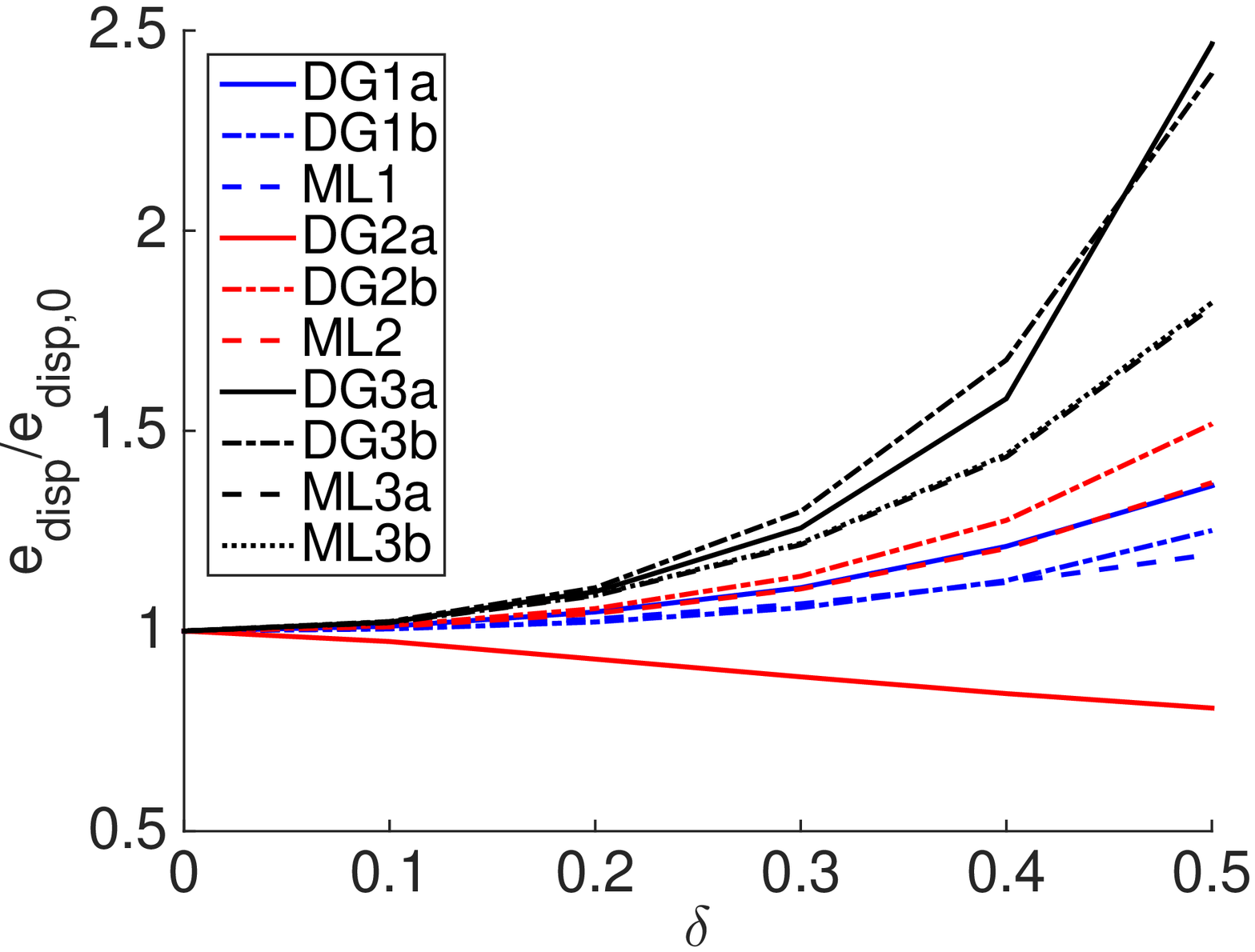}
\end{subfigure} \,\,
\begin{subfigure}[b]{0.45\textwidth}
  \includegraphics[width=\textwidth]{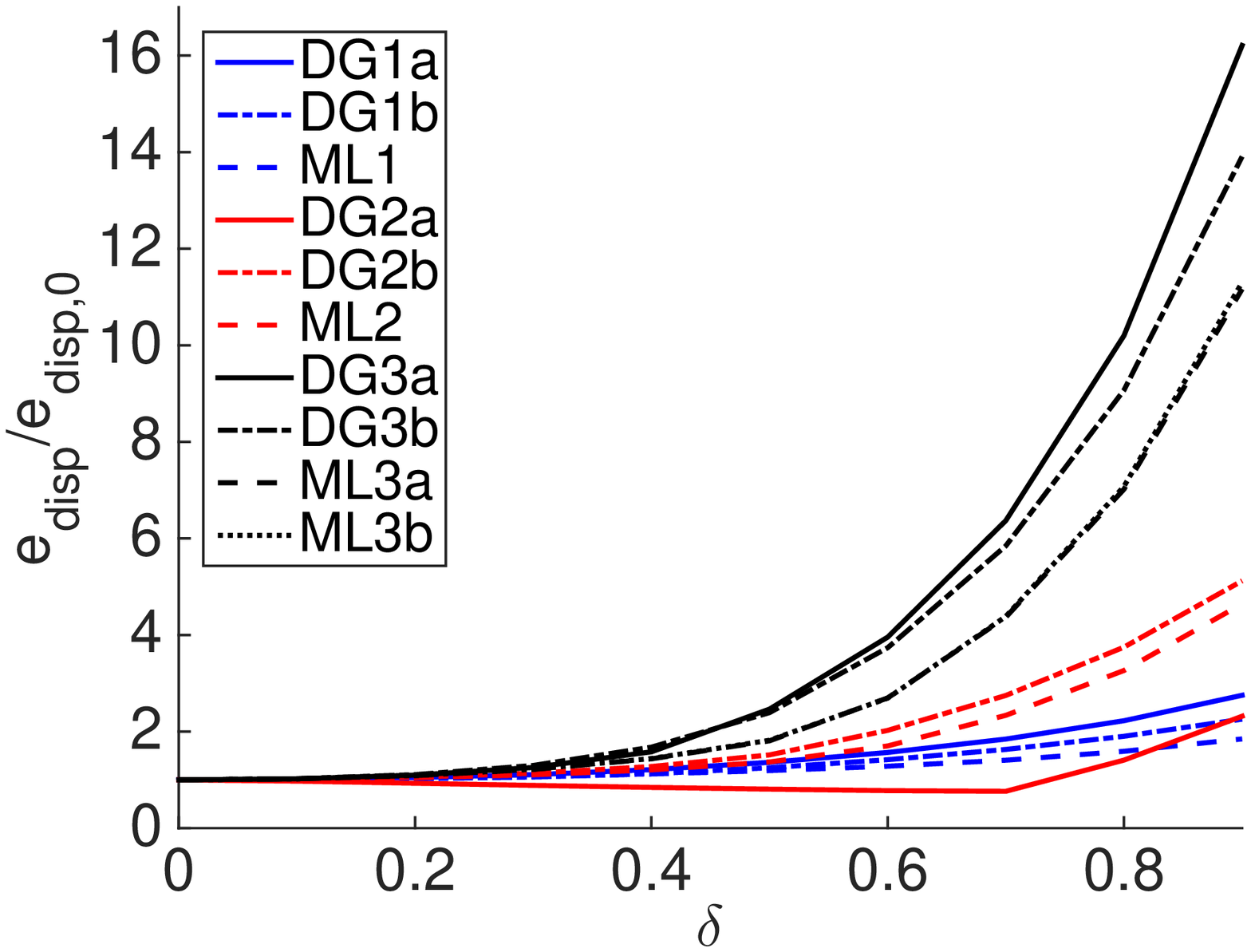}
\end{subfigure}
\caption{Relative dispersion error for meshes with a distortion $\delta$. Here, $e_{disp,0}$ denotes the error of the regular mesh with $\delta = 0$.}
\label{fig:errRelAcDist}
\end{figure}

To create distorted meshes, we displace some of the vertices of the disphenoid mesh. In particular, we create a distorted mesh using the following steps:
\begin{enumerate}
  \item Slice the cube $[0,0.5)^3$ into 6 tetrahedra with the planes $x=y$, $x=z$, $y=z$.
  \item Repeat this pattern $2\times 2\times 2$ times to pack the unit cell $[0,1)$ with 48 tetrahedra.
  \item Displace the central node by moving it from $(0.5,0.5,0.5)$ to $\big(0.5(1+\delta),0.5(1+\delta),0.5(1+\delta)\big)$, where $\delta\in[0,1)$ denotes the size of the distortion.
  \item Apply the transformation $\vx\rightarrow\ten{T}\cdot\vx$, with $\ten{T}$ defined as in (\ref{eq:defT}).
\end{enumerate}
In case of zero distortion, $\delta=0$, we obtain the original disphenoid honeycomb, scaled by a factor $0.5$. When the distortion $\delta$ approaches $1$, some of the elements become completely flat with zero volume.

An illustration of the mesh with distortion $\delta=0.4$ is given in Figure \ref{fig:meshDist}. In Figure \ref{fig:errAcDist04}, the dispersion and eigenvector error are plotted against the number of elements per wavelength for a heavily distorted mesh with $\delta=0.9$. These results show that the order of convergence remains $2p$ for the dispersion and $p+1$ for the eigenvector error, even though the mesh is distorted. The distortion does, however, affect the leading constant of the errors. The effect of the mesh distortion on the dispersion error is illustrated in Figure \ref{fig:errRelAcDist}. Again, the accuracy is not significantly affected by small distortions, but large distortions can reduce the accuracy by an order of magnitude.

\subsection{Elastic Waves and the Effect of the P/S-wave Velocity Ratio}

Besides the acoustic wave model, we also consider the isotropic elastic wave model. Figure \ref{fig:errIso1} illustrates the dispersion and eigenvector error with respect to the number of elements per wavelength for the isotropic elastic wave model with $\mu=\rho=1$ and $\lambda=2$, so with a P/S-wave velocity ratio of $2$. Again, the order of convergence is $2p$ for the dispersion error and $p+1$ for the eigenvector error. 

By extrapolating these results we can again obtain approximations of the errors of the form $e=\alpha (N_E)^{-\beta}$, which are given in Table \ref{tab:errIso1}. Figure \ref{fig:errDisptIsoComp1} illustrates the relation between the dispersion error and the computational cost, based on these results. The relative performance of the different methods is similar to the acoustic case.

\begin{figure}[h]
\centering
\begin{subfigure}[b]{0.45\textwidth}
  \includegraphics[width=\textwidth]{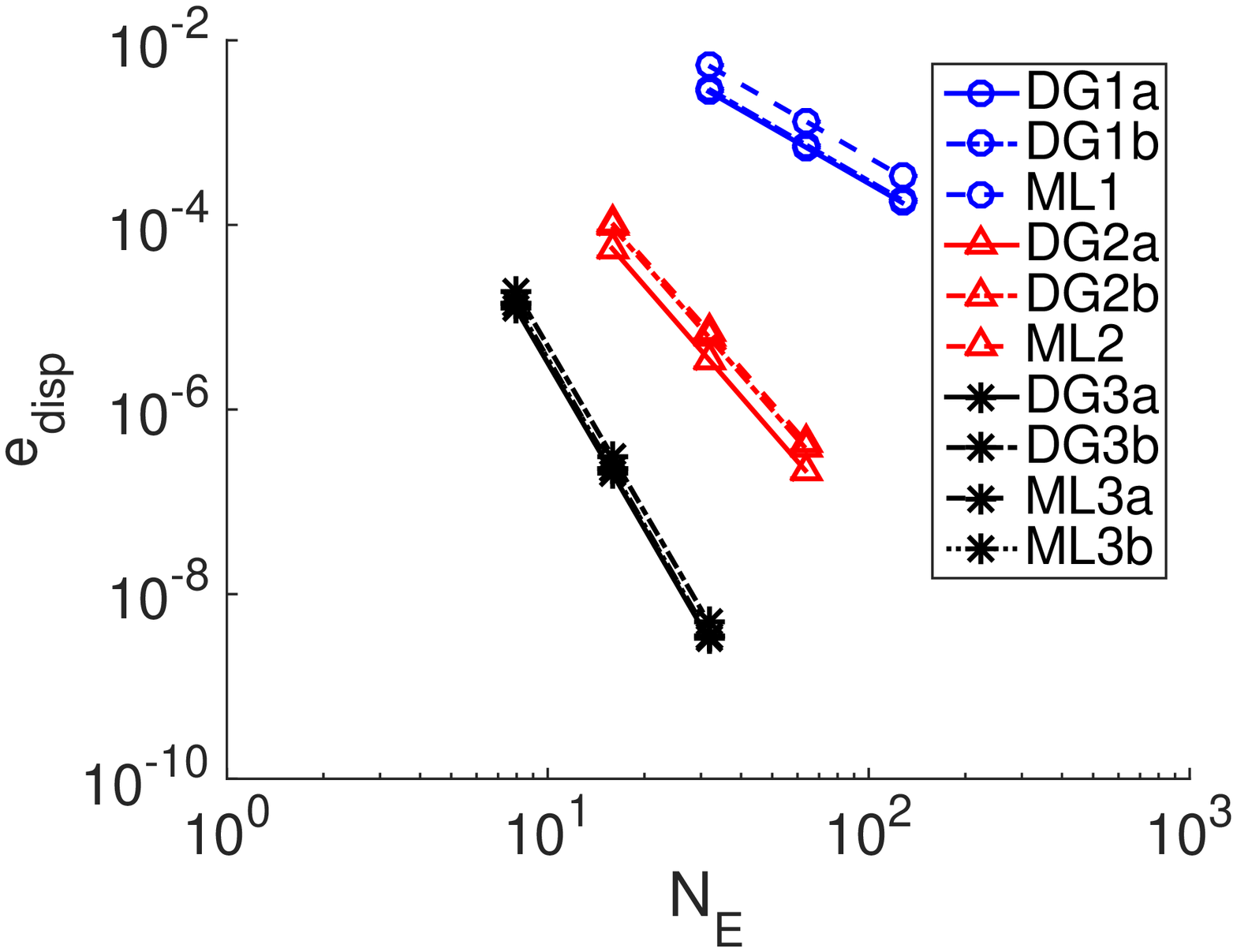}
\end{subfigure} \,\,
\begin{subfigure}[b]{0.45\textwidth}
  \includegraphics[width=\textwidth]{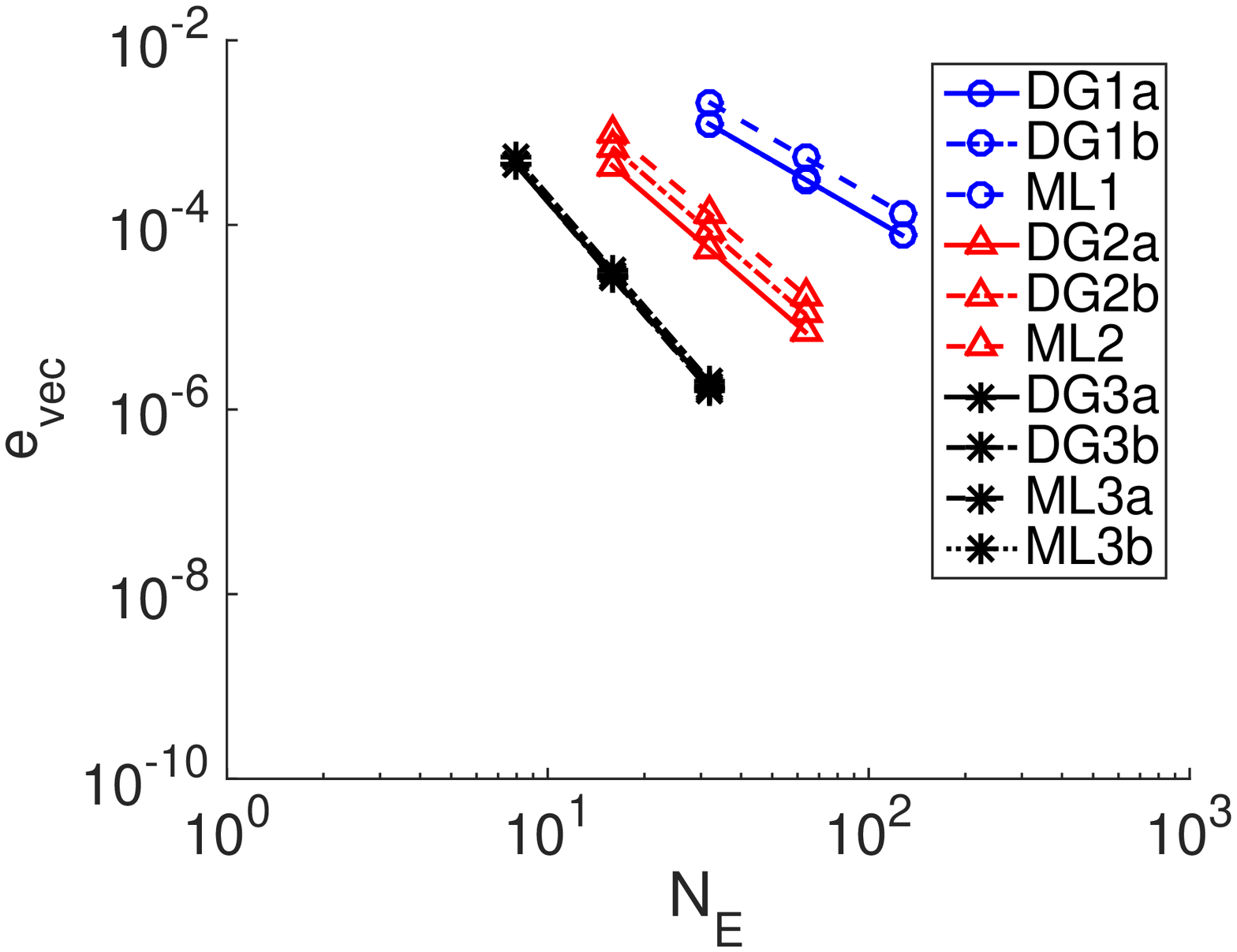}
\end{subfigure}
\caption{Dispersion error (left) and eigenvector error (right) for the isotropic elastic wave model with a P/S-wave velocity ratio of $2$.}
\label{fig:errIso1}
\end{figure}

\begin{table}[h]
\caption{Approximation of the dispersion and eigenvector error for the elastic wave model with a P/S-wave velocity ratio of $2$.}
\label{tab:errIso1}
\begin{center}
{\tabulinesep=0.5mm
\begin{tabu}{c||c|c|}
Method	& $e_{disp}$ 	& $e_{vec}$  \\ \hline\hline
DG1a 	& $2.81(N_E)^{-2}$ & $1.25(N_E)^{-2}$ \\
DG1b 	& $3.00(N_E)^{-2}$ & $1.25(N_E)^{-2}$ \\
ML1 		& $5.39(N_E)^{-2}$ & $2.16(N_E)^{-2}$      \\ \hline
DG2a  	& $3.55(N_E)^{-4}$ & $1.76(N_E)^{-3}$ \\
DG2b 	& $6.20(N_E)^{-4}$ & $2.77(N_E)^{-3}$ \\
ML2  	& $7.29(N_E)^{-4}$ & $4.39(N_E)^{-3}$ \\ \hline
DG3a  	& $3.32(N_E)^{-6}$ & $1.79(N_E)^{-4}$ \\
DG3b  	& $5.04(N_E)^{-6}$ & $2.11(N_E)^{-4}$ \\
ML3a  	& $3.63(N_E)^{-6}$ & $1.66(N_E)^{-4}$ \\
ML3b  	& $3.58(N_E)^{-6}$ & $1.69(N_E)^{-4}$ \\
\end{tabu}}
\end{center}
\end{table}

\begin{figure}[h]
\centering
\includegraphics[width=0.6\textwidth]{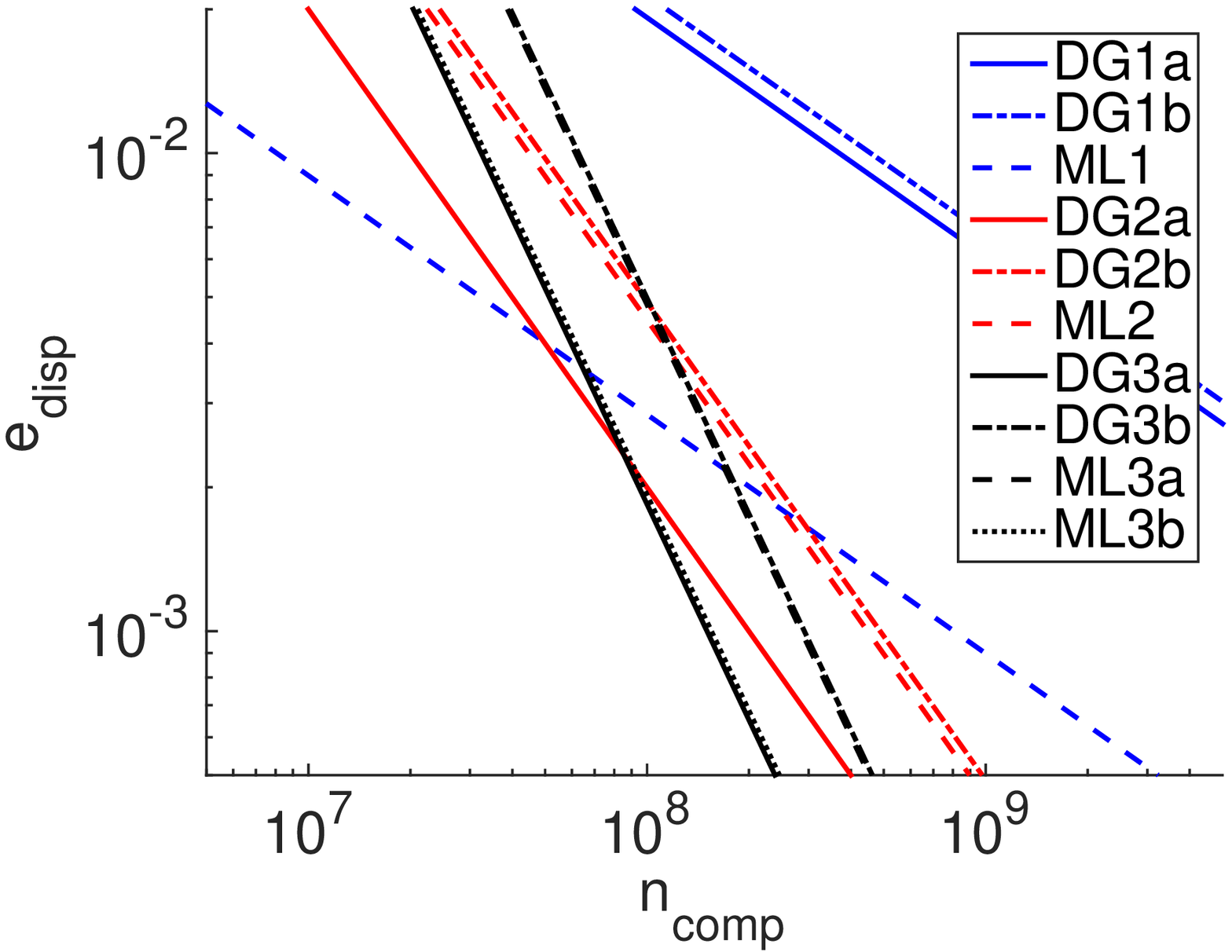}
\caption{Dispersion error of different finite element methods for the isotropic elastic wave model with a P/S-wave velocity ratio of $2$, plotted against the estimated computational cost. The graphs of DG3a and ML3b and of DG3b and ML3a are almost identical.}
\label{fig:errDisptIsoComp1}
\end{figure}

We also look at the influence of the P/S-wave velocity ratio $c_P/c_S$ on the dispersion error, where $c_S=\sqrt{\mu}$ denotes the S-wave velocity and $c_P=\sqrt{\lambda+2\mu}$ denotes the P-wave velocity. This relation is illustrated in Figure \ref{fig:errRelIsoC10}. This figure shows that the DG methods are not really sensitive to the $c_P/c_S$ ratio, since the dispersion error never grows more than a factor $1.5$. The higher-order mass-lumped methods are slightly more sensitive, with a dispersion error becoming around $3$ times as large for $c_P/c_S=10$, compared to $c_P/c_S=2$, while the linear mass-lumped method is very sensitive, with a dispersion error becoming almost $40$ times as large in this case.

\begin{figure}[h]
\centering
\begin{subfigure}[b]{0.45\textwidth}
  \includegraphics[width=\textwidth]{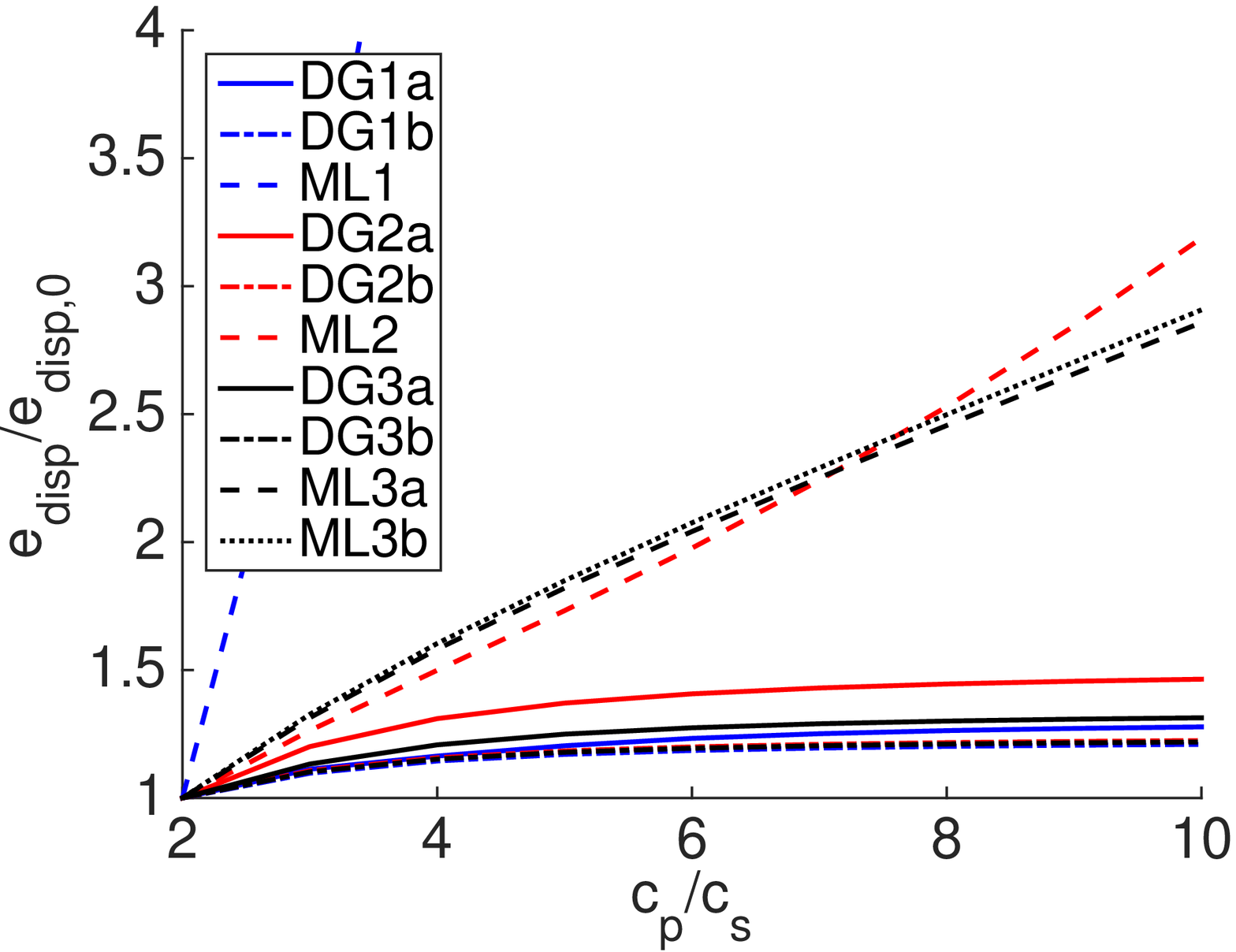}
\end{subfigure} \,\,
\begin{subfigure}[b]{0.45\textwidth}
  \includegraphics[width=\textwidth]{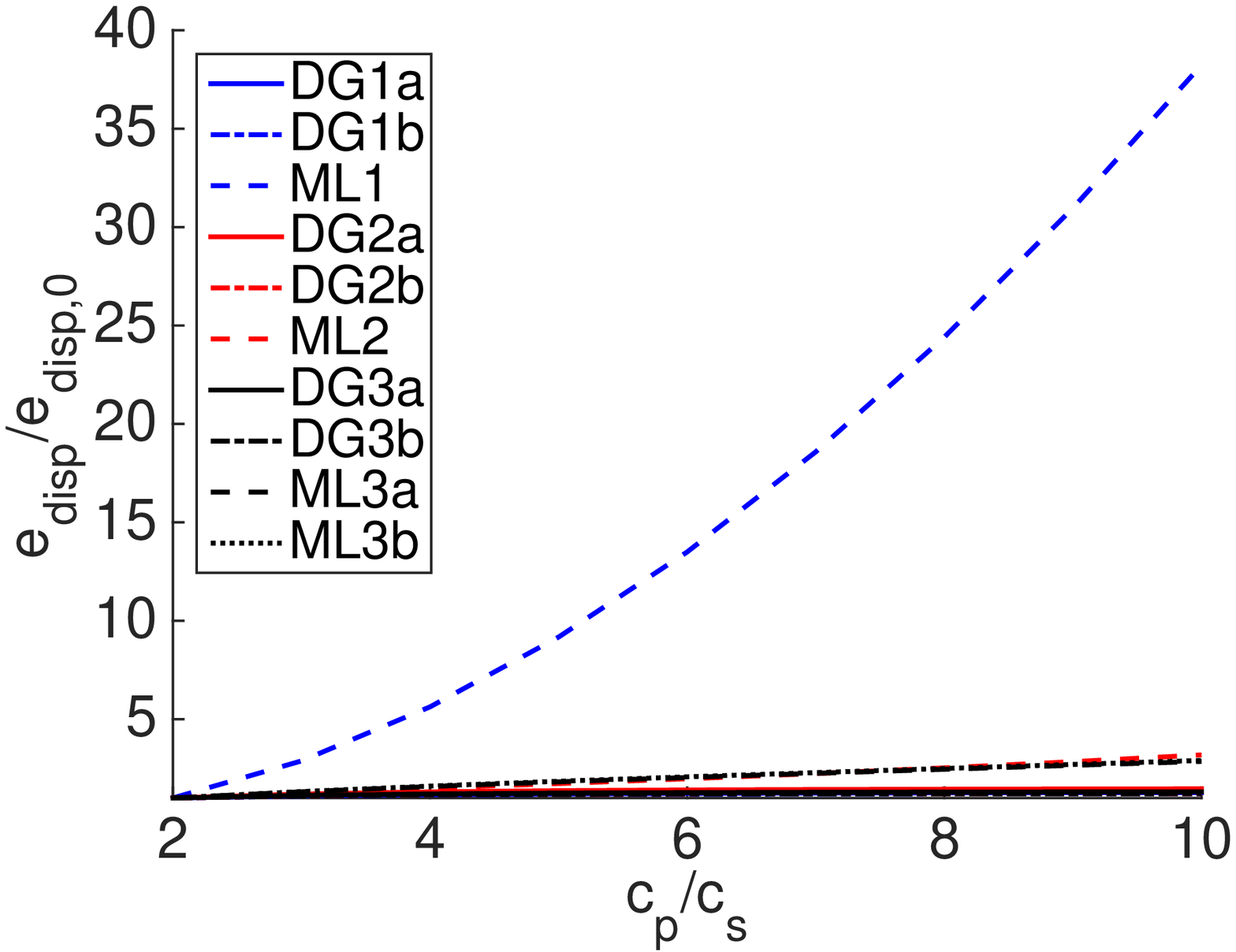}
\end{subfigure} 
\caption{Relative dispersion error for the isotropic elastic wave model with different $c_P/c_S$ ratios. Here, $e_{disp,0}$ denotes the error for the original mesh with $c_P/c_S=2$.}
\label{fig:errRelIsoC10}
\end{figure}

\section{Conclusions}
\label{sec:conclusions}
We analysed the dispersion properties of two types of explicit finite element methods for modelling wave propagation on tetrahedral meshes, namely mass-lumped finite elements methods and symmetric interior penalty discontinuous Galerkin (SIPDG) methods, both for degrees $p=1,2,3$ and combined with an order-$2p$ Lax--Wendroff time integration method. The analysed methods are listed in Table 1.

The dispersion properties are obtained semi-analytically using standard Fourier analysis. We used this to give an indication of which method is the most efficient for a given accuracy, how many elements per wavelength are required for a given accuracy, and how sensitive the accuracy of the method is to poorly shaped elements and high P/S-wave velocity ratios. 

Based on the results we draw the following conclusions with regard to efficiency:
\begin{itemize}
  \item The linear mass-lumped method is the most efficient method for a dispersion error of around $1\%$ when using approximately regular tetrahedra. Heavily distorted elements, however, can significantly reduce its accuracy.
  \item The degree-$3$ SIPDG method, with the penalty term derived in \cite{geevers17} and given by (\ref{eq:penTerm_a}), and the second degree-$3$ mass-lumped finite element method of \cite{chin99} are the most efficient methods for a dispersion error of around $0.1\%$ and less.
  \item The SIPDG methods using the sharper penalty term bound derived in \cite{geevers17} are significantly more efficient than those using the penalty term of \cite{mulder14}, which is based on the trace inequality of \cite{warburton03}. 
\end{itemize}

The required number of elements for a given accuracy can be obtained from the approximations given in Tables \ref{tab:errAc1} and \ref{tab:errIso1}. We also draw the following conclusions with regard to accuracy:
\begin{itemize}
  \item Higher-order methods suffer more from spurious modes for the same dispersion error. This is due to the fact that for higher-order methods, the convergence rate of the dispersion error, $2p$, is larger than the convergence rate of the eigenvector, $p+1$.
  \item All methods are significantly affected by a poor mesh quality, although lower-order methods are more sensitive to this than higher-order methods. Flattening the tetrahedra by a factor $10$ reduces the accuracy of the methods by $1$-$2$ orders of magnitude, even though the mesh resolution remains the same and even improves in one direction.
  \item The SIPDG methods are not really sensitive to high P/S-wave velocity ratios, while the accuracy of the higher-order mass-lumped methods reduces slightly when the P/S-wave velocity ratio is increased. The accuracy of the linear mass-lumped method, however, reduces by an order of magnitude when the P/S-wave velocity ratio is raised from $2$ to $10$.
\end{itemize}

\bibliographystyle{abbrv}    
\bibliography{dispersionAnalysis}

\appendix
\section{Stability of the Lax--Wendroff Method}
\begin{thm}
\label{thm:stabLW1}
Consider the following time integration scheme:
\begin{align*}
U(t_{i+1}) = -U(t_{i-1}) + 2\beta U(t_i), &&i=1,2,\dots,
\end{align*}
where $\beta\in\mathbb{R}$ is a constant and $\{U(t_i)\}_{i\geq 0}$ is a sequence of scalars representing a scalar variable $u(t)$ at time slots $t_i=i\Delta t$, with $\Delta t$ the time step size. This scheme is stable, by which we mean that the solution grows at most linearly in time, iff $|\beta|\leq 1$. 
\end{thm}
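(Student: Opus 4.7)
The plan is to solve the recurrence explicitly via its characteristic polynomial and then read off stability from the location of the roots. Writing the recurrence as $U(t_{i+1}) - 2\beta U(t_i) + U(t_{i-1}) = 0$, the associated characteristic equation is $r^2 - 2\beta r + 1 = 0$, with roots $r_{\pm} = \beta \pm \sqrt{\beta^2-1}$. Note that $r_+ r_- = 1$, which is the crucial structural observation: the two roots are reciprocals of each other, so if one lies strictly outside the unit disc the other lies strictly inside, and in the boundary case $|\beta|=1$ we get a double root of modulus one.

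First I would treat the sufficiency direction ($|\beta|\le 1$ implies stability). For $|\beta|<1$, the roots are complex conjugates of modulus one; write $r_\pm = e^{\pm i\theta}$ with $\cos\theta = \beta$, so the general solution is $U(t_i) = A\cos(i\theta) + B\sin(i\theta)$, which is uniformly bounded (hence grows at most linearly). For $\beta = \pm 1$, the characteristic polynomial has a double root $r = \pm 1$, and the general solution takes the form $U(t_i) = (\pm 1)^i(A + Bi)$, which grows at most linearly in $i$ (and hence linearly in $t_i = i\Delta t$), so the bound is attained but still linear.

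For necessity, I would show that $|\beta| > 1$ forces exponential growth for generic initial data. Here the roots $r_\pm$ are real and distinct with $r_+ r_- = 1$, so exactly one of them, say $r_+$, satisfies $|r_+| > 1$. The general solution is $U(t_i) = A r_+^i + B r_-^i$, where $A,B$ are determined by $U(t_0),U(t_1)$ via an invertible Vandermonde system (the roots are distinct). Choosing any initial data for which $A \neq 0$ yields $|U(t_i)| \sim |A|\,|r_+|^i$, which grows exponentially and therefore faster than linearly, violating stability.

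The step that requires the most care is making the linear-growth bound in the $|\beta|=1$ double-root case precise, since that is the boundary regime where the definition of stability must be interpreted exactly as stated (at most linear growth, not boundedness). The rest is routine linear algebra on a scalar two-term recurrence; no additional structure from the wave equation or the time-step condition~\eqref{eq:CFL} is needed here, and indeed the theorem will later be applied to each eigenmode of $M^{-1}A$ with $\beta = \sum_{k=0}^K \frac{1}{(2k)!}(-\Delta t^2 s_h)^k$, the condition $|\beta|\le 1$ recovering the CFL bound~\eqref{eq:CFL} via the definition of $c_K$ in~\eqref{eq:cK}.
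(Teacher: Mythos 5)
Your proposal is correct and follows essentially the same route as the paper: both solve the scalar recurrence explicitly and split into the cases $|\beta|<1$ (unit-modulus oscillatory solutions with $\cos(\omega\Delta t)=\beta$), $\beta=\pm 1$ (double root giving the linearly growing solutions $(\pm1)^n$ and $n(\pm1)^n$), and $|\beta|>1$ (a real root of modulus greater than one forcing exponential growth). Your explicit use of the characteristic polynomial and the reciprocal-root observation $r_+r_-=1$ is just a slightly more detailed presentation of the same argument.
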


\begin{proof} If $\beta\in(-1,1)$, then the two independent solutions of the time integration scheme are given by $U(t_n) = e^{\pm\im (t_n \omega)}$, where $\omega$ satisfies $\cos(\omega\Delta t)=\beta$ and $\im:=\sqrt{-1}$ is the imaginary number. Otherwise, if $\beta=1$ (or $\beta=-1$), then the two independent solutions are given by $U(t_n)=1,n$ (or $U(t_n)=(-1)^n, n(-1)^n$). Finally, if $\beta\geq 1$ (or $\beta<-1$), then the two independent solutions are given by $U(t_n)=e^{\pm t_n\omega}$ (or $U(t_n)=-e^{\pm t_n\omega}$), where $\omega$ satisfies $\cosh(\omega\Delta t)=\beta$ (or $-\cosh(\omega\Delta t)=\beta$). Therefore, the scheme grows at most linearly in time iff $\beta\in[-1,1]$.
\end{proof}

\begin{thm}
\label{thm:stabLW2}
Consider the order-$2K$ Lax--Wendroff time integration method given by
\begin{align*}
\vvU(t_{i+1}) = -\vvU(t_{i-1}) + 2\sum_{k=0}^K \frac{1}{(2k)!}\Delta t^{2k}(-M^{-1}A)^k\vvU(t_i), &&i=1,2,\dots,
\end{align*}
where $M$ and $A$ are symmetric positive definite matrices, and $\{\vvU(t_i)\}_{i\geq 0}$ is a sequence of vectors representing a vector variable $\vvu(t)$ at time slots $t_i=i\Delta t$, with $\Delta t$ the time step size. This scheme is stable, by which we mean that the solution grows at most linearly in time, if $\Delta t\leq \sqrt{c_K/\sigma_{max}(M^{-1}A)}$, where $\sigma_{max}(M^{-1}A)$ denotes the spectral radius of $M^{-1}A$ and $c_K$ is defined as
\begin{align*}
c_K &:= \inf\left\{x\geq 0 \;|\; \left|\sum_{k=0}^K \frac{1}{(2k)!}(-x)^k\right| > 1 \right\}.
\end{align*}
\end{thm}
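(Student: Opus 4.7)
The plan is to reduce the vector-valued recursion to a collection of uncoupled scalar recursions via the generalized eigendecomposition of the pair $(A,M)$, and then invoke Theorem~\ref{thm:stabLW1} on each scalar component.

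First, since $M$ and $A$ are both symmetric positive definite, the matrix $M^{-1}A$ is similar to the symmetric positive definite matrix $M^{-1/2}AM^{-1/2}$, so it has real positive eigenvalues $\{s_j\}_{j=1}^n$ and a complete set of eigenvectors $\{\vvv_j\}_{j=1}^n$ that form a basis of $\mathbb{R}^n$ (in fact $M$-orthogonal, though we do not need this). Expand the sequence in this basis,
\begin{align*}
\vvU(t_i) = \sum_{j=1}^n U_j(t_i)\vvv_j,
\end{align*}
and apply the recursion to $\vvv_j$: since $(-M^{-1}A)^k\vvv_j=(-s_j)^k\vvv_j$, the scheme decouples into $n$ independent scalar recursions
\begin{align*}
U_j(t_{i+1}) = -U_j(t_{i-1}) + 2\beta_j U_j(t_i), \qquad \beta_j := \sum_{k=0}^K \frac{1}{(2k)!}(-\Delta t^{2}s_j)^k,
\end{align*}
for $j=1,\dots,n$.

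Next, by Theorem~\ref{thm:stabLW1}, each scalar recursion grows at most linearly in time iff $|\beta_j|\leq 1$. Since any linear combination of at-most-linearly-growing sequences also grows at most linearly, the vector scheme is stable provided $|\beta_j|\leq 1$ for every $j$. It therefore suffices to translate this into a condition on $\Delta t$.

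Finally, from the definition of $c_K$, the polynomial $P(x):=\sum_{k=0}^K (-x)^k/(2k)!$ satisfies $|P(x)|\leq 1$ for all $x\in[0,c_K]$, by continuity of $P$ and the fact that $P(0)=1$. Hence if $\Delta t^{2}s_j\leq c_K$ for all $j$, then $|\beta_j|=|P(\Delta t^{2}s_j)|\leq 1$ for all $j$. Since $\sigma_{max}(M^{-1}A)=\max_j s_j$, the hypothesis $\Delta t\leq \sqrt{c_K/\sigma_{max}(M^{-1}A)}$ implies $\Delta t^2 s_j\leq c_K$ for every $j$, and the conclusion follows.

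The only mildly delicate step is the diagonalization: one should note that although $M^{-1}A$ is generally non-symmetric, similarity to $M^{-1/2}AM^{-1/2}$ gives a real, positive spectrum and a full eigenbasis, so the decoupling is legitimate. Everything else is a direct appeal to Theorem~\ref{thm:stabLW1} and the definition of $c_K$.
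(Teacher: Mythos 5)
Your proof is correct and follows essentially the same route as the paper's: diagonalize $M^{-1}A$ (using that $M$ and $A$ are symmetric positive definite), decouple the recursion into scalar recursions with $\beta_j=\sum_{k=0}^K\frac{1}{(2k)!}(-\Delta t^2 s_j)^k$, invoke Theorem~\ref{thm:stabLW1}, and conclude from the definition of $c_K$. Your explicit justification of the diagonalization via similarity to $M^{-1/2}AM^{-1/2}$ and of $|P(x)|\leq 1$ on $[0,c_K]$ simply spells out details the paper leaves implicit.
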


\begin{proof}
We can rewrite the time integration scheme as
\begin{align*}
\vvU(t_{i+1}) = -\vvU(t_{i-1}) + 2B\vvU(t_i),
\end{align*}
where $B:=\sum_{k=0}^K \frac{1}{(2k)!}\Delta t^{2k}(-M^{-1}A)^k$. Since $M$ and $A$ are symmetric positive definite, we can diagonalise $M^{-1}A$ as $VDV^{-1}$, with $D$ a diagonal matrix with only positive real values on the diagonal. We can then diagonalise $B$ as $B=V\left(\sum_{k=0}^K \frac{1}{(2k)!}\Delta t^{2k}(-D)^k\right)V^{-1}$. Using this diagonalisation we can decouple the matrix-vector equations into scalar equations of the form
\begin{align*}
U(t_{i+1}) = -U(_{i-1}) + 2\beta U(t_i), &&i=1,2,\dots,
\end{align*} 
with 
\begin{align*}
\beta&=\sum_{k=0}^K\frac{1}{(2k)!}(-s\Delta t^2)^k &&\text{ for some eigenvalue }s\text{ of }M^{-1}A.
\end{align*}
From the definition of $c_K$, it follows that $|\beta|\leq 1$ for all possible $\beta$, if $\Delta t^2\sigma_{max}(M^{-1}A)\leq c_K$, so if $\Delta t\leq \sqrt{c_K/\sigma_{max}(M^{-1}A)}$. From Theorem \ref{thm:stabLW1} it then follows that this scheme is stable.
\end{proof}

\begin{rem}
\label{rem:stabLW}
The values of $c_K$ can be computed numerically. For example, $c_K=4,12,7.57$ for $K=1,2,3$, respectively.
\end{rem}

\end{document}